\def\nn{{\mathbb N}}
\def\dist{{\mathop\mathrm{\,dist\,}}}
\def\ez{\epsilon}
\def\boz{{\Omega}}
\def\wz{\widetilde}
\def\ls{\lesssim}
\def\bint{{\ifinner\rlap{\bf\kern.35em--}
\int\else\rlap{\bf\kern.45em--}\int\fi}\ignorespaces}
\def\bbint{{\ifinner\rlap{\bf\kern.35em--}
\hspace{0.078cm}\int\else\rlap{\bf\kern.45em--}\int\fi}\ignorespaces}
\def\esup{\mathop\mathrm{\,esssup\,}}
\def\diam{{\mathop\mathrm{\,diam\,}}}
\newtheorem{thm}{Theorem}[section]
\newtheorem{lem}[thm]{Lemma}
\newtheorem{cor}[thm]{Corollary}
\newtheorem{defn}[thm]{Definition}
\numberwithin{equation}{section}
\theoremstyle{remark}
\def\bint{{\ifinner\rlap{\bf\kern.35em--}
\int\else\rlap{\bf\kern.45em--}\int\fi}\ignorespaces}
\begin{document}

\title[A density problem for Sobolev spaces on Gromov hyperbolic domains]
{A density problem for Sobolev spaces on Gromov hyperbolic domains}

\author{Pekka Koskela}
\author{Tapio Rajala}
\author{Yi Ru-Ya Zhang}

\address{Department of Mathematics and Statistics \\
         P.O. Box 35 (MaD) \\
         FI-40014 University of Jyv\"as\-kyl\"a \\
         Finland}
\email{pekka.j.koskela@jyu.fi} 
\email{tapio.m.rajala@jyu.fi}
\email{yi.y.zhang@jyu.fi}

\thanks{All authors partially supported by the Academy of Finland.}
\subjclass[2010]{46E35}
\keywords{Sobolev space, density}
\date{\today}



\begin{abstract}
We prove that for a bounded domain $\Omega\subset \mathbb R^n$ which is 
Gromov hyperbolic with respect to the quasihyperbolic metric, especially 
when $\Omega$ is a finitely connected planar domain, the Sobolev space 
$W^{1,\,\infty}(\Omega)$ is dense in $W^{1,\,p}(\Omega)$ for any $1\le p<\infty$. 
Moreover if $\Omega$ is also Jordan or quasiconvex, 
then $C^{\infty}(\mathbb R^n)$ is dense in $W^{1,\,p}(\Omega)$  for 
$1\le p<\infty$. 
\end{abstract}


\maketitle

\tableofcontents

\section{Introduction}

Let $\boz \subset\mathbb R^n$ be a domain with $n\ge 2$. We denote by 
$D_i u=\frac{\partial u}{\partial x_i}$ the (weak) $i^{th}$ partial 
derivative of a 
locally integrable function $u$, and by 
$\nabla u=(D_1 u,\,\dots,\,D_n u)$ the (weak) gradient. 
Then for $1 \le p \le \infty$ we define  the Sobolev space as 
$$W^{1,\,p}(\Omega)=\left\{u \in L^p(\boz)\mid D_i u \in L^p(\boz),\, 1\le i\le n \right\}, $$
with the norm
$$\|u\|^p_{W^{1,p}(\boz)} = \int_{\boz} |u(x)|^p+|\nabla u(x)|^p \, dx$$
for $1 \le p < \infty$, and
$$\|u\|_{W^{1,\infty}(\boz)} =\esup_{x\in\Omega} |u(x)|+ \sum_{1\le i\le n} \esup_{x\in\Omega}  |D_i u(x)|. $$

It is a fundamental property of Sobolev spaces that smooth 
functions defined in $\Omega$ are dense in 
$W^{1,p}(\Omega)$ for any domain $\Omega\subset \mathbb R^n$ when 
$1 \le p < \infty$. 
If each function in $W^{1,p}(\Omega)$ is the restriction of a function
in  $W^{1,\,p}(\mathbb R^n),$ one can then obviously use global smooth 
functions to approximate functions in $ W^{1,p}(\Omega)$. 
This is in particular the case for Lipschitz domains. 
Moreover,  if $\Omega$ satisfies the so-called ``segment condition", then one 
has that $C^{\infty}(\mathbb R^n)$ is dense in $W^{1,\,p}(\Omega)$; see e.g. 
\cite{AF2003} for  references. 

In the planar setting,  Lewis proved in \cite{L1985} that 
$C^{\infty}(\mathbb R^2)$ is dense in $W^{1,p}(\Omega)$ for $1<p<\infty$ 
provided that $\Omega$ is a Jordan domain.
More recently, in \cite{GT2007} it was shown by Giacomini and Trebeschi that, 
for bounded simply connected planar domains, $W^{1,\,2}(\Omega)$ is dense in 
$W^{1,\,p}(\Omega)$ for all $1\le p <2$. 
Motivated by the results above, Koskela and  Zhang proved in \cite{KZ2016}
that for any bounded simply connected domain and any $1\le p<\infty$, $W^{1,\,\infty}(\Omega)$ is dense in $W^{1,\,p}(\Omega)$, and $C^{\infty}(\mathbb R^2)$ is dense in $W^{1,\,p}(\Omega)$ when $\Omega$ is Jordan. 

In this paper, we extend the main idea in \cite{KZ2016} so as to handle both 
multiply connected and higher dimensional settings. It turns out that simply
connectivity (or trivial topology) is not sufficient for approximation results
in higher dimensions.

\begin{thm}\label{example}
Given $1< p<\infty$, there is a bounded domain $\Omega\subset \mathbb R^3,$ 
homeomorphic to the unit ball via a locally bi-Lipschitz homeomorphism, 
such that $W^{1,\,q}(\Omega)$ is not dense in $W^{1,\,p}(\Omega)$ for any $q >p$. 
\end{thm}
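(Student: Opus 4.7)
The plan is to exhibit a bounded ``rooms and passages''--style domain in $\mathbb{R}^3$ whose geometry is chosen so that the gradient-cost budget for $W^{1,p}$ and $W^{1,q}$ differs at every passage scale, and then to produce a function that saturates the $W^{1,p}$ budget but lies infinitely beyond any $W^{1,q}$ approximation.

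\emph{Step 1 (Domain construction).} I would take a central ball $B \subset \mathbb R^3$ and attach a countable collection of thin ``fingers'' $\{F_k\}_{k\ge1}$ to $\partial B$ at a sequence of points accumulating at a boundary point $x_\infty$. Each finger $F_k = P_k \cup R_k$ consists of a cylindrical passage $P_k$ of length $\ell_k$ and square cross-section $h_k \times h_k$, attached to a cubical ``room'' $R_k$ of side $r_k$. With $\ell_k, h_k, r_k \to 0$ chosen so that the fingers are disjoint, the total volume is finite, and the attachments are smoothed, the resulting bounded domain $\Omega$ is $C^\infty$ and admits a locally bi-Lipschitz homeomorphism to the open unit ball.

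\emph{Step 2 (Target function).} Define $u$ by $u\equiv 0$ on $B$, $u\equiv c_k$ on $R_k$, and affine interpolation along $P_k$. A direct computation gives
\[
\|\nabla u\|_{L^p(\Omega)}^p \asymp \sum_k h_k^2\,\ell_k^{\,1-p}\,c_k^p, \qquad \|u\|_{L^p(\Omega)}^p \asymp \sum_k r_k^3\,c_k^p.
\]
I would then choose the geometric parameters and the values $c_k$ (depending on $p$) so that both series converge, hence $u \in W^{1,p}(\Omega)$, while
\[
\sum_k h_k^2\,\ell_k^{\,1-q}\,c_k^q = \infty \qquad \text{for every } q>p.
\]
This is possible because the factor $\ell_k^{p-q}$ that appears in passing from the $W^{1,p}$ to the $W^{1,q}$ gradient-weight on each passage tends to $\infty$.

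\emph{Step 3 (Failure of density).} For any $v \in W^{1,q}(\Omega)$ with $\|u - v\|_{W^{1,p}(\Omega)}$ small, the Poincar\'e--Sobolev estimate on each passage $P_k$ yields
\[
h_k^2\,\ell_k^{\,1-q}\,|(v)_{R_k} - (v)_B|^q \lesssim \int_{P_k} |\nabla v|^q,
\]
so that $\sum_k h_k^2 \ell_k^{1-q} |(v)_{R_k} - (v)_B|^q < \infty$. On the other hand, smallness of $\|u-v\|_{L^p}$ forces $(v)_B$ close to $0$ and $(v)_{R_k}$ close to $c_k$ in a weighted sense, while smallness of $\|\nabla u - \nabla v\|_{L^p}$ forces the jump sequence $(v)_{R_k} - (v)_B$ to be close to $c_k$ in the natural $W^{1,p}$-weighted $\ell^p$ norm on jumps. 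Combined with the chosen divergence of $\sum h_k^2 \ell_k^{1-q} c_k^q$, this produces a contradiction.

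\emph{Main obstacle.} The real difficulty is ruling out \emph{every} candidate approximation in Step 3, not just the most natural ones: truncating $u$ at a level $M$, or setting $u \equiv 0$ on all but finitely many fingers, can already yield $W^{1,q}$-functions in the simplest mushroom constructions. To defeat such shortcuts, the geometric parameters $(r_k, h_k, \ell_k)$ and the values $c_k$ must be balanced delicately: the room volumes $r_k^3$ have to be large enough that $L^p$-closeness on each room transfers faithfully to closeness of the room averages, while the passages have to be thin enough that the $W^{1,q}$-gradient budget on them places strong restrictions on those averages. Achieving this balance so that the contradiction persists for \emph{all} $q > p$ simultaneously is the technical heart of the construction, and is a genuinely three-dimensional phenomenon, consistent with the failure of such a counterexample in the plane.
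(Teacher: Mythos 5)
The construction you propose cannot work as stated, and the obstacle you flag at the end is not a matter of delicate parameter tuning but a structural impossibility for this geometry. Concretely, for $K\in\mathbb N$ define $v_K$ to agree with $u$ on $B\cup\bigcup_{k<K}F_k$ and to vanish identically on $\bigcup_{k\ge K}F_k$. Since $u\equiv 0$ on $B$ and each finger $F_k$ meets the rest of $\Omega$ only through its mouth on $\partial B$, the function $v_K$ is continuous across every junction, piecewise affine with finitely many pieces, and bounded; hence $v_K\in W^{1,\infty}(\Omega)\subset W^{1,q}(\Omega)$ for every $q$. Moreover
\[
\|u-v_K\|_{W^{1,p}(\Omega)}^p=\sum_{k\ge K}\Bigl(\int_{F_k}|u|^p\,dx+\int_{F_k}|\nabla u|^p\,dx\Bigr)\longrightarrow 0\quad\text{as }K\to\infty,
\]
precisely because the series expressing $\|u\|_{W^{1,p}(\Omega)}^p$ converges. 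Thus $u$ always lies in the $W^{1,p}$-closure of $W^{1,\infty}(\Omega)$, for any admissible choice of $(\ell_k,h_k,r_k,c_k)$. Your Step~3 does not exclude this: $W^{1,p}$-closeness controls the jumps $(v)_{R_k}-(v)_B$ only in a weighted $\ell^p$ sense with summable weights tending to zero, and the tail truncation $v_K$ is fully compatible with that control. (Such a finger domain is in fact expected to be bounded Gromov hyperbolic, so Theorem~\ref{mainthm} would already give the $W^{1,\infty}$-density even without exhibiting $v_K$ explicitly.)

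The moral is that the obstruction to approximation cannot sit along a discrete sequence of independent fingers accumulating at a boundary point, because any finite collection of fingers can be chopped off. The paper instead places the obstruction at one fixed geometric location: $\Omega$ is a slab $(-1,2)^2\times(0,1)$ whose lower half is slit along $E\times(0,\tfrac12]$, with $E=C\times F\subset\mathbb R^2$ a product of a thin Cantor set $C$ and a fat Cantor set $F$, built so that $E$ is removable for $W^{1,q}$ exactly when $q>p$ (Theorem~\ref{remove}). A Cantor-staircase-type function on the slit plane lies in $W^{1,p}$ but admits no $W^{1,p}$ extension across $E$; lifting it to the slab with a cutoff in the vertical variable and arguing by Fubini on horizontal slices, any $W^{1,q}$ approximant in $\Omega$ would, by removability of $E$ for $W^{1,q}$, produce such a $W^{1,p}$ extension, a contradiction. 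This handles all $q>p$ at once because the removability threshold is encoded in $E$ once and for all, and it is immune to truncation because the slit has a positive-length cross-section $F$ carrying the staircase jump. If you try to rescue the rooms-and-passages intuition by attaching the rooms along a set of positive measure in the boundary rather than at a discrete sequence of points, you essentially rediscover the fat-Cantor slit of the paper.
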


Recall that $f:\Omega\to \Omega'$ is locally bi-Lipschitz if
for every compact set $K\subset \Omega$ there exists $L=L(K)$ such that for 
all $x,\,y\in K$
$$\frac 1 L |x-y|\le |f(x)-f(y)|\le L|x-y|. $$

The above example shows that the planar setting is very special. 
The crucial point
is that a simply connected planar domain is conformally equivalent (by
the Riemann mapping theorem) to the unit disk, and conformal equivalence
is in general much more restrictive than topological equivalence. One could
then ask if the planar approximation results extend to hold for those spatial
domains that are conformally equivalent to the unit ball. This is trivially 
the case since the Liouville theorem implies that such a domain is necessarily
a ball or a half-space. A bit of thought reveals that bi-Lipschitz equivalence
is also sufficient. Our results below imply that bi-Lipschitz equivalence can 
be relaxed to quasiconformal equivalence to the unit ball or even to 
quasiconformal equivalence to a uniform domain, a natural class of domains
in the study of (quasi)conformal geometry.

In order to state our main result, we need to introduce some terminology.

\begin{defn}
Let $\Omega\subsetneqq \mathbb R^n$ be a domain. 
Then the associated quasihyperbolic distance between two points 
$z_1,\,z_2\subset \Omega$ is defined as

$$\dist_{qh}(z_1,\,z_2)=\inf_{\gamma} \int_{\gamma} \dist(z,\,\partial \Omega)^{-1}\, dz, $$
where the infimum is taken over all the rectifiable curves $\gamma\subset \Omega$ connecting $z_1$ and $z_2$. A curve attaining this infimum is called a 
quasihyperbolic geodesic connecting $z_1$ and $z_2$. The distance between two 
sets is also defined in a similar manner. 

Moreover, a domain $\Omega$ is called {\it $\delta$-Gromov hyperbolic with respect to the quasihyperbolic metric}, if for all $x,\,y,\,z\in \Omega$ and any 
corresponding quasihyperbolic geodesics $\gamma_{x,\,y},\,\gamma_{y,\,z},\,\gamma_{x,\,z}$,  we have
$$\dist_{qh}(w,\,\gamma_{y,\,z}\cup \gamma_{x,\,z})\le \delta, $$
for any $w\in \gamma_{x,\,y}$. 
\end{defn}

For the existence of quasihyperbolic geodesics we refer to \cite[Proposition 2.8]{BHK2001}. 
For applications, it is usually easier to apply one of the equivalent definitions, see Lemma~\ref{eqdef} below.
Recall that a set $E \subset \mathbb R^n$
is called {\it{quasiconvex}} if there exists a constant $C\ge1$ such that any pair of 
points $z_1,z_2 \in E$ can be connected to each other with a rectifiable curve 
$\gamma \subset E$
whose length satisfies $\ell(\gamma) \le C|z_1-z_2|$.

\begin{thm}\label{mainthm}
If $\Omega\subset \mathbb R^n$ is  a bounded domain that is $\delta$-Gromov hyperbolic with respect to the quasihyperbolic metric, then for any $1\le p< \infty$, $W^{1,\,\infty}(\Omega)$ is dense in $W^{1,\,p}(\Omega)$.  Moreover, if $\Omega$ is also either Jordan or quasiconvex, we have that $C^{\infty}(\mathbb R^n)$ is dense in $W^{1,\,p}(\Omega)$.
\end{thm}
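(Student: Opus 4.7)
The plan is to adapt the Whitney-decomposition scheme from \cite{KZ2016}, with Gromov $\dz$-hyperbolicity of the quasihyperbolic metric playing the role that simple connectivity played in the planar case. Given $u\in W^{1,p}(\boz)$ and $\ez>0$, first reduce to bounded $u$ by truncating values: $u_k:=\max(-k,\min(k,u))\to u$ in $W^{1,p}(\boz)$, so assume $u\in L^{\fz}(\boz)\cap W^{1,p}(\boz)$.

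Fix a Whitney decomposition $\{Q_j\}_j$ of $\boz$ and a base point $x_0\in\boz$. For a large parameter $T$ set $\boz_T:=\{x\in\boz:\dist_{qh}(x_0,x)<T\}$, a precompact subdomain of $\boz$ with $|\boz\setminus\boz_T|\to 0$. Write the connected components of $\boz\setminus\overline{\boz_T}$ as $\{C_\az\}_\az$, each attached to $\boz_T$ through a ``gate'' $G_\az\subset\pa C_\az$. Here the Gromov $\dz$-hyperbolicity of $\dist_{qh}$ is decisive: via the equivalent formulations of Lemma~\ref{eqdef}, it forces the Whitney-cube adjacency graph of $\boz$ to be $\dz$-hyperbolic, so the thin-triangle property guarantees that distinct fingers of $\boz\setminus\overline{\boz_T}$ join $\boz_T$ through quasihyperbolically narrow necks, each consisting of a uniformly bounded number of Whitney cubes.

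Build the approximation $v_T$ piecewise: mollify $u$ on $\boz_T$ at scale $\sz\ll\dist(\boz_T,\pa\boz)$; set $v_T\equiv c_\az$ on the deep part of each $C_\az$, where $c_\az$ is the average of $u$ over a Whitney cube meeting $G_\az$ from the $\boz_T$ side; interpolate across $G_\az$ by a Lipschitz cutoff whose transition width is comparable to the Euclidean diameter of the neck. Each piece is either smooth, constant, or globally Lipschitz, so $v_T\in W^{1,\fz}(\boz)$. Splitting $\|u-v_T\|_{W^{1,p}(\boz)}^p$ by piece, the $\boz_T$-contribution vanishes with $\sz$, the $L^p$-contribution on $\bigcup_\az C_\az$ is at most $(2\|u\|_{\fz})^p|\boz\setminus\boz_T|$, and $\|\nabla u\|_{L^p(\boz\setminus\boz_T)}\to 0$ by absolute continuity. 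The \textbf{main obstacle} is the transition gradient
\[
\sum_\az\int_{\text{collar}_\az}|\nabla v_T|^p \;\ls\; \sum_\az \frac{1}{\ell(G_\az)^p}\int_{\text{collar}_\az}|u-c_\az|^p,
\]
and this is exactly where Gromov hyperbolicity pays off: the narrow-neck property allows chaining Whitney cubes across each $G_\az$ with bounded overlap and controlled length, yielding a chain/Poincar\'e inequality that bounds the right-hand side by $C\sum_\az\|\nabla u\|_{L^p(\widetilde C_\az)}^p$ for slightly enlarged neighborhoods $\widetilde C_\az$ of bounded overlap, which tends to $0$ as $T\to\fz$.

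For the final assertion, assume $\boz$ is Jordan or quasiconvex. In either case any $v\in W^{1,\fz}(\boz)$ admits a Euclidean-Lipschitz representative: directly by equivalence of intrinsic and extrinsic metrics when $\boz$ is quasiconvex, and by a slightly more delicate argument exploiting the Jordan boundary structure in the other case. Such a representative extends to a Lipschitz function on $\rn$ via McShane--Kirszbraun, and standard mollification in $\rn$ produces a $C^{\fz}(\rn)$ approximant. Chaining this with the $W^{1,\fz}$-density already proved yields the theorem.
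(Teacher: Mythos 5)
Your overall strategy matches the spirit of the paper's (Whitney decomposition, partition of unity interpolating between $u$ on a core and local averages near the boundary, with chaining/Poincar\'e on the transition region, and Gromov hyperbolicity controlling the chains), but two steps as written are false, and both are places where the paper's construction is doing real work that your sketch elides.

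\textbf{The narrow-neck claim.} You assert that Gromov hyperbolicity forces the components of $\Omega\setminus\overline{\Omega_T}$ to attach to $\Omega_T$ through necks ``each consisting of a uniformly bounded number of Whitney cubes,'' and you use a single constant $c_\alpha$ per component plus a Lipschitz cutoff of width $\sim\ell(G_\alpha)$. Gromov hyperbolicity gives thin geodesic triangles (equivalently, ball-separation and Gehring--Hayman), but it does \emph{not} bound the size of quasihyperbolic spheres. For instance $\Omega=(0,L)^2\times(0,1)\subset\mathbb R^3$ is a uniform domain, hence Gromov hyperbolic, yet the quasihyperbolic sphere of radius $T$ about the center meets roughly $T$ unit Whitney cubes and has quasihyperbolic diameter $\sim T$. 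So the gate of a component can be quasihyperbolically large, a single $c_\alpha$ cannot serve the whole gate, and the factor $\ell(G_\alpha)^{-p}\int |u-c_\alpha|^p$ in your main estimate is not controlled by a bounded-overlap chaining argument. The paper avoids this by building the core $\Omega_m$ from Whitney cubes of side $\ge 2^{-m}$, trimming it with the $C_1$-ball-separation condition (the $Block_j$ procedure producing Lemma~\ref{core part}), decomposing the collar $E_m\cup F_m$ into pieces $S_j,T_j$ attached to individual boundary cubes $Q_j\in\mathcal D_m$, and assigning a separate average $a_j=\bint_{Q_j}u$ to each; only averages over \emph{adjacent} boundary cubes need comparison, so the chains stay of bounded length (Lemma~\ref{uniform bound}). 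Your scheme is closer to \cite{KZ2016} for simply connected planar domains, where the conformal map to the disk tames the sphere geometry; here one needs the cube-by-cube construction.

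\textbf{The Jordan case.} You claim that $W^{1,\infty}(\Omega)$ functions on a Jordan domain admit Euclidean-Lipschitz representatives. This is false: a Jordan domain with many inward-pointing thin fingers (a comb with Jordan boundary) admits $W^{1,\infty}$ functions, constant on each finger with different constants, that jump by $O(1)$ across gaps of arbitrarily small Euclidean width; the domain is even simply connected planar, hence Gromov hyperbolic. The paper's Jordan argument is of a different nature: it constructs Lipschitz domains $G_s\supset\supset\Omega$ converging in Hausdorff distance, observes that for $s\gg m$ the cubes of $\Omega_m$ remain Whitney-type for $G_s$ and that the ball-separation and Gehring--Hayman conditions for $G_s$ still hold with doubled constants on this scale, re-runs the partition-of-unity construction relative to $G_s$ to produce $v_m\in W^{1,p}(G_s)$ with $\|u-v_m\|_{W^{1,p}(\Omega)}\lesssim\ez$, and only then uses the Lipschitz extension property of $G_s$ and mollification. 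That is, it extends the \emph{specific approximant} across a nearby Lipschitz domain, rather than extending an arbitrary $W^{1,\infty}(\Omega)$ function. (Your quasiconvex argument is fine.)
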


Each finitely connected planar domain is  Gromov hyperbolic with
respect to the quasihyperbolic metric. Therefore we recover the main 
theorem in \cite{KZ2016}. Furthermore, domains which are quasiconformally 
equivalent to uniform domains, especially the ones quasiconformally
equivalent to a ball, are Gromov hyperbolic domains. 
See \cite{BHK2001} for these results. 

Theorem~\ref{mainthm} also gives
consequences for $BV(\Omega)$, 
the Banach space of functions in $L^1(\Omega)$ with bounded variation.
Indeed, given $u\in BV(\Omega)$ we have a sequence of functions 
$u_j\in W^{1,1}(\Omega)$ (or smooth in $\Omega$) that converges to $u$ in 
$L^1(\Omega)$ and
so that the $BV$-energy of $u,$ $||Du||(\Omega),$ satisfies
$$||Du||(\Omega)=\lim_j||\nabla u_j||_{L^1(\Omega)}.$$
Based on Theorem~\ref{mainthm}, we may
further assume that $u_j\in W^{1,\,\infty}(\Omega)$ when $\Omega$ is bounded
and Gromov hyperbolic, and even that each 
$u_j$ is the restriction
of a global smooth function when $\Omega$ is Jordan or quasiconvex. We refer the reader to \cite{AFP} for further information on the theory of $BV$-functions.



The paper is organized as follows. In Section 2 we give some preliminaries. 
After this we decompose a bounded domain $\Omega$ (which is $\delta$-Gromov 
hyperbolic  with respect to the quasihyperbolic metric) into several parts via
Lemma~\ref{eqdef}, and then construct a corresponding partition of unity.  
In \cite{KZ2016} conformal mappings and planar geometry were applied
to obtain the desired composition. In our setting, we cannot rely on mappings 
nor on simple geometry. Instead of this we employ two characterizing 
properties of Gromov hyperbolicity: the ball-separation condition and the 
Gehring-Hayman inequality; see Lemma~\ref{eqdef} below. The proof of  
Theorem~\ref{mainthm} is given in Section 3, and finally in the last section 
we discuss the necessity of  geometric conditions.

The notation in this paper is quite standard. When we make estimates, we often write the constants as 
positive real numbers $C(\cdot)$ with
the parenthesis including all the parameters on which the constant depends. The constant $C(\cdot)$ may
vary between appearances, even within a chain of inequalities.
By $a \sim b$ we mean that $b/C \le a \le Cb$ for some constant $C \ge 2$.
Also $a \lesssim b$ means $a\le C b$ with $C\ge 1$, and similar to $a\gtrsim b$. 
The Euclidean distance between two sets $A,\,B \subset \mathbb R^n$ is denoted by $\dist(A,\,B)$. 
We call a \emph{dyadic cube} in $\mathbb R^n$ any set
\[
 [m_1 2^{-k},\,(m_1 + 1)2^{-k}]\times \cdots \times[m_n2^{-k},\, (m_n+ 1)2^{-k}], 
\]
where $m_1,\,\dots,\, m_n,\,k \in \mathbb Z$.
We denote by $\ell(Q)$ the side length of the cube $Q$, and by $\ell(\gamma)$  the length of a curve $\gamma$. Given a cube $Q$ and $\lambda>0$, by $\lambda Q$ we mean   the cube  concentric with $Q$, with sides parallel to the axes, and with length
$\ell(\lambda Q) =\lambda \ell(Q)$. For a set $A\subset \mathbb R^n$, we denote by $A^o$ its interior, $\partial A$ its boundary, and $\overline A$ its closure. Notation $A\subset\subset B$ means that the set $A$ is compactly contained in $B$.

\section{Decomposition of the domain}
In this section, we first recall some lemmas related to Gromov hyperbolic domains, and then decompose our domain into two main parts. At the end of this 
section we construct a corresponding partition of unity.

Define the {\it{inner distance with respect to $\Omega$}} between 
$x,\,y\in\Omega$ by setting
$$\dist_{\Omega}(x,\,y)=\inf_{\gamma\subset \Omega} \ell(\gamma),$$
where the infimum runs over all curves joining $x$ and $y$ in $\Omega.$ The ball centered at $x$ with radius $r$ respect to the inner distance is denoted by $B_{\Omega}(x,\,r)$.

Let $\Omega\subset \mathbb R^n,$ $n\ge 2$ be a bounded domain that is 
$\delta$-Gromov with respect to the quasihyperbolic metric. 
Recall that $\delta$-Gromov hyperbolicity can equivalently be defined as 
follows; see \cite{BHK2001} and \cite{BB2003}. 

\begin{lem} \label{eqdef}
A domain $\Omega\subset \mathbb R^n$ is  $\delta$-Gromov hyperbolic with respect to the quasihyperbolic metric if and only if it has the following two properties: 
\begin{enumerate}[\rm 1)]
\item  $C_1$-ball-separation condition: There exists a constant $C_1\ge 1$ such that, for any $x,\,y \in \Omega$, any quasihyperbolic geodesic $\Gamma$ joining $x$ and $y$, and every $z\in \Gamma$, the ball $$B=B_{\Omega}(z,\,C_1\dist(z,\,\partial \Omega))$$ 
satisfies   $B\cap \gamma\neq\emptyset$ for any curve $\gamma\subset \Omega$ connecting $x$ and $y$.

\item $C_2$-Gehring-Hayman condition: For any $x,\,y\in \Omega$, the Euclidean length of each quasihyperbolic geodesic connecting $x$ and $y$ is no more than $C_2\dist_{\Omega}(x,\,y)$. 
\end{enumerate}
Here all the constants depend only on each other and $n$. 
\end{lem}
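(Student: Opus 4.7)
The plan is to establish both implications of the equivalence, following the strategies of Bonk-Heinonen-Koskela~\cite{BHK2001} and Balogh-Buckley~\cite{BB2003}. Throughout, write $k=\dist_{qh}$ and recall the standard comparison $k(u,v)\sim |u-v|/\dist(u,\partial\Omega)$ valid when $|u-v|\le \tfrac12\dist(u,\partial\Omega)$, and its logarithmic analogue on larger scales. For the direction \emph{Gromov hyperbolicity $\Rightarrow$ (1) and (2)}, I would first deduce ball-separation. Fix $x,y,z,\Gamma,\gamma$ as in (1). Discretize $\gamma$ into points $p_0=x,p_1,\dots,p_N=y$ with $k(p_{i-1},p_i)\le 1$, join consecutive points by qh-geodesics $\Gamma_i$, and apply the $\delta$-thin triangle condition iteratively to the polygon formed by $\Gamma$ and $\bigcup_i \Gamma_i$. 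This gives $q\in\bigcup_i\Gamma_i$ with $k(z,q)\le C(\delta)$, and hence a point $w\in\gamma$ with $k(z,w)\le C'(\delta)$; converting back to the inner-Euclidean scale furnishes the constant $C_1$. For the Gehring-Hayman estimate, I would compare a qh-geodesic $\Gamma$ from $x$ to $y$ with a near-minimizer of inner length by decomposing $\Gamma$ into dyadic levels of $\dist(\cdot,\partial\Omega)$; $\delta$-thinness forces the length at each level to be dominated by the corresponding contribution along the minimizer, and summation yields $\ell(\Gamma)\lesssim \dist_\Omega(x,y)$.

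For the converse \emph{(1) and (2) $\Rightarrow$ Gromov hyperbolicity}, I would verify $\delta$-thinness of qh-geodesic triangles directly. Given $x,y,z$ and qh-geodesics $\gamma_{x,y},\gamma_{y,z},\gamma_{x,z}$, fix $w\in\gamma_{x,y}$ and view the concatenation $\gamma_{x,z}\cup\gamma_{z,y}$ as a curve from $x$ to $y$. The $C_1$-ball-separation condition applied to $w\in\gamma_{x,y}$ yields $w'\in\gamma_{x,z}\cup\gamma_{z,y}$ lying in $B_\Omega(w,C_1\dist(w,\partial\Omega))$. If an inner-length-minimizing curve from $w$ to $w'$ stays in the region where $\dist(\cdot,\partial\Omega)\gtrsim \dist(w,\partial\Omega)$, then $k(w,w')\lesssim C_1$. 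Otherwise, the Gehring-Hayman condition applied to $w,w'$ bounds the qh-geodesic length between them by $C_2 C_1\dist(w,\partial\Omega)$, again yielding a uniform bound on $k(w,w')$. Taking $\delta$ as this uniform bound completes the verification.

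The main obstacle is the delicate conversion between inner-Euclidean and qh-proximity in both directions: small inner distance need not imply small qh-distance when the connecting curve dips toward $\partial\Omega$. This is exactly where the Gehring-Hayman condition enters, ruling out pathological qh-geodesics, and it is why (1) and (2) must appear together as a quantitative substitute for Gromov hyperbolicity. Careful bookkeeping of constants in the discretization argument of the first direction, and in the dyadic-scale decomposition used for Gehring-Hayman, is required to keep all constants dependent only on $\delta$ and $n$ (respectively $C_1,C_2,n$).
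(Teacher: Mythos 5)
The paper does not prove this lemma; it is quoted directly from Bonk--Heinonen--Koskela \cite{BHK2001} and Balogh--Buckley \cite{BB2003}. So the comparison is between your sketch and the cited arguments, not between your sketch and anything written in this paper.

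Your outline is broadly in the spirit of those references, but two steps would not go through as written. First, in the direction ``Gromov $\Rightarrow$ ball-separation'', the iterated thin-triangle argument applied to the polygon with sides $\Gamma,\Gamma_1,\dots,\Gamma_N$ gives a qh-distance bound of order $\delta\log_2 N$: a bound that grows with the number of discretization pieces, which in turn is controlled only by the quasihyperbolic length of $\gamma$, a quantity you cannot bound a priori. Even granting $k(z,q)\le C$, passing to a point $w\in\gamma$ costs another uncontrolled amount, since $q$ is only known to lie on some $\Gamma_i$, not on $\gamma$ itself. This logarithmic loss is precisely why ball-separation is nontrivial; in the references it is obtained in concert with the Gehring--Hayman estimate and the specific metric structure of the quasihyperbolic metric, not from abstract $\delta$-thinness alone.

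Second, and more seriously, your ``otherwise'' branch in the converse direction is a non-sequitur. The Gehring--Hayman condition bounds the \emph{Euclidean} length of the qh-geodesic $\sigma$ from $w$ to $w'$ by $C_2C_1\dist(w,\partial\Omega)$; it says nothing directly about the \emph{quasihyperbolic} length $k(w,w')=\int_\sigma \dist(\cdot,\partial\Omega)^{-1}\,ds$, which can be large even for curves of small Euclidean length whenever they dip close to $\partial\Omega$. To close this gap one must first establish that $\sigma$ stays at distance comparable to $\dist(w,\partial\Omega)$ from the boundary (a ``cigar''-type lower bound along the geodesic), which requires combining ball-separation and Gehring--Hayman in a nontrivial bootstrap; the uniform bound on $k(w,w')$ does not follow from a single application of the Gehring--Hayman inequality.
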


The above Gehring-Hayman condition was proven for simply connected planar
domains in \cite{GH1962} and the ball-separation condition in \cite{BK1995},
respectively.

Recall that every open proper subset of $\mathbb R^n$ admits a Whitney decomposition. A standard reference for this  is \cite[Chapter VI]{stein1970}. 

\begin{lem}\label{Whitney}
Let $\Omega\subsetneqq \mathbb R^n$ be a domain. Then it admits a \emph {Whitney decomposition}, that is, there exists
 a collection $W=\{Q_j\}_{j\in\nn}$ of countably many dyadic (closed) cubes such that

 (i) $\Omega=\cup_{j\in\nn}Q_j$ and $ (Q_k)^\circ\cap (Q_j)^\circ=\emptyset$ for all $j,\,k\in\nn$  with $j\ne k$;

 (ii) $\ell(Q_k)\le \dist(Q_k,\,\partial\boz)\le 4\sqrt n \ell(Q_k)$;

 (iii) $\frac 1 4\ell(Q_k)\le  \ell(Q_j) \le 4\ell(Q_k)$ whenever  $Q_k\cap Q_j\ne\emptyset$.
\end{lem}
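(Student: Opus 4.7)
The plan is to carry out the classical Whitney construction and verify the three claims in sequence. First I would select a candidate collection of dyadic cubes whose size is calibrated to their distance to $\partial\Omega$, and then pass to maximal cubes to secure the disjointness property.

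Concretely, let $\mathcal{F}$ denote the collection of all closed dyadic cubes $Q$ satisfying $\ell(Q)\le \dist(Q,\partial\Omega)$. Since $\Omega$ is open, for any $x\in\Omega$ any sufficiently small dyadic cube through $x$ (say of side length at most $\dist(x,\partial\Omega)/(1+\sqrt{n})$) lies in $\mathcal{F}$, while $\Omega\ne\mathbb R^n$ forces the chain of dyadic cubes through $x$ belonging to $\mathcal{F}$ to be bounded above in side length by $\dist(x,\partial\Omega)$. Hence this chain admits a maximal element, and I define $W$ as the family of all such maximal cubes. Property~(i) is then immediate: each $x\in\Omega$ lies in the maximal cube through $x$, giving the covering, and the interior-disjointness follows automatically from the fact that two dyadic cubes are either nested or interior-disjoint, combined with maximality.

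Property~(ii) is verified using the dyadic parent $\widehat{Q}$ of a cube $Q\in W$. The lower bound is just the defining inequality of $\mathcal{F}$. For the upper bound, maximality forces $\widehat{Q}\notin\mathcal{F}$, i.e.\ $2\ell(Q)=\ell(\widehat{Q})>\dist(\widehat{Q},\partial\Omega)$; since $Q\subset\widehat{Q}$ and $\diam(\widehat{Q})=2\sqrt{n}\,\ell(Q)$, a one-step triangle inequality gives
\[
\dist(Q,\partial\Omega)\le \dist(\widehat{Q},\partial\Omega)+\diam(\widehat{Q})<(2+2\sqrt{n})\ell(Q)\le 4\sqrt{n}\,\ell(Q).
\]
Property~(iii) follows by comparing distances at a common point: if $p\in Q_j\cap Q_k$ with $\ell(Q_j)\ge\ell(Q_k)$, then $\dist(Q_j,\partial\Omega)\le\dist(p,\partial\Omega)\le \dist(Q_k,\partial\Omega)+\diam(Q_k)$, which combined with (ii) bounds $\ell(Q_j)/\ell(Q_k)$ by a dimensional constant. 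Because this ratio is an integer power of~$2$, a direct arithmetic check upgrades the dimensional bound to the factor of $4$ claimed.

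The whole argument is essentially bookkeeping with triangle inequalities and the dyadic hierarchy, so there is no substantial obstacle. The only slightly subtle point is that the specific constant in (iii) must be squeezed out of the sharp form of the estimate in (ii) rather than its weaker stated form; as this is a textbook result, I would cite \cite[Chapter~VI]{stein1970} for the full verification of the constants and not dwell on them here.
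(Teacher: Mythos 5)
The paper gives no proof of Lemma~\ref{Whitney}; it simply cites \cite[Chapter~VI]{stein1970}, so there is no in-paper argument against which to compare. Your proposal is the standard Stein construction, which is the right idea, but the specific calibration you chose for $\mathcal{F}$ does not actually yield property~(iii) with the stated constant $4$ in all dimensions.

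Concretely, you define $\mathcal{F}=\{Q\ \text{dyadic}: \ell(Q)\le\dist(Q,\partial\Omega)\}$ and take maximal cubes. With this choice the lower bound in~(ii) is only $\ell(Q)\le\dist(Q,\partial\Omega)$, and even the sharp upper bound that falls out of your parent argument is $\dist(Q,\partial\Omega)<(2+2\sqrt n)\ell(Q)$. Feeding these into your estimate for (iii) gives, for $p\in Q_j\cap Q_k$ with $\ell(Q_j)\ge\ell(Q_k)$,
\[
\ell(Q_j)\le\dist(Q_j,\partial\Omega)\le\dist(Q_k,\partial\Omega)+\diam(Q_k)<(2+2\sqrt n)\ell(Q_k)+\sqrt n\,\ell(Q_k)=(2+3\sqrt n)\ell(Q_k),
\]
a bound that grows like $\sqrt n$. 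Already at $n=5$ one has $2+3\sqrt 5>8$, so the dyadic rigidity argument (``the ratio is a power of $2$'') can no longer force $\ell(Q_j)/\ell(Q_k)\le 4$; it only gives $\le 8$, and in higher dimensions the constant keeps degrading. So the ``direct arithmetic check'' you invoke fails, and the subtlety is not, as you suggest, about using a sharp versus stated form of~(ii) — it is that the calibration $\ell(Q)\le\dist(Q,\partial\Omega)$ is too weak.

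The fix is the one in Stein: define $\mathcal{F}$ by $\diam(Q)\le\dist(Q,\partial\Omega)$, i.e.\ $\sqrt n\,\ell(Q)\le\dist(Q,\partial\Omega)$. Maximality of $Q$ then gives $\diam(\widehat Q)>\dist(\widehat Q,\partial\Omega)$, hence $\dist(Q,\partial\Omega)<2\diam(\widehat Q)=4\diam(Q)=4\sqrt n\,\ell(Q)$, recovering~(ii) exactly as stated (the lower bound $\ell(Q)\le\sqrt n\,\ell(Q)\le\dist(Q,\partial\Omega)$ is now a weakening of what the construction actually provides). For~(iii), using the stronger lower bound $\diam(Q_j)\le\dist(Q_j,\partial\Omega)$, the same triangle inequality gives
\[
\diam(Q_j)\le\dist(Q_k,\partial\Omega)+\diam(Q_k)<4\diam(Q_k)+\diam(Q_k)=5\diam(Q_k),
\]
so $\ell(Q_j)/\ell(Q_k)<5$, and since this ratio is a power of $2$ it is at most $4$, uniformly in $n$. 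The remainder of your write-up (covering, interior-disjointness via the nesting of dyadic cubes, existence of maximal cubes since $\Omega\ne\mathbb R^n$) is fine.
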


The lemmas above allow us to establish the following key lemma. 
\begin{lem}\label{uniform bound}
Suppose $Q_1$ and $Q_2$ are Whitney cubes of $\Omega$ satisfying
$$\frac 1 c \ell(Q_1) \le \ell(Q_2)\le c \ell(Q_1) \quad \text{and } \quad \dist_{\Omega}(Q_1,\,Q_2)\le c \ell(Q_1) $$
for some constant $c>1$. Moreover assume that they can be joined by a chain of 
Whitney cubes, whose edge lengths are larger than $c^{-1}\ell(Q_1)$. 
Then there exists a sequence of no more than $C(c,\,n,\,C_1,\,C_2)$ Whitney 
cubes of $\Omega$, of edge lengths comparable to $\ell(Q_1)$, such that their union connects $Q_1$ and $Q_2$. 
Especially we have 
$$\dist_{qh}(Q_1,\,Q_2)\le C(c,\,n,\,C_1,\,C_2). $$
\end{lem}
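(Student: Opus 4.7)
The plan is to use the quasihyperbolic geodesic between representative points of $Q_1$ and $Q_2$ as the backbone of the desired chain, and to read off the sizes of the Whitney cubes it traverses using the two characterizations of Gromov hyperbolicity from Lemma~\ref{eqdef}. Fix the centers $x_1\in Q_1$ and $x_2\in Q_2$ and let $\Gamma$ be a quasihyperbolic geodesic joining them. First I would bound the Euclidean length of $\Gamma$: since $Q_i$ is convex with $\diam(Q_i)\le \sqrt n\,\ell(Q_i)$, the inner distance from $x_1$ to $x_2$ is at most $\dist_\Omega(Q_1,Q_2)+\sqrt n(\ell(Q_1)+\ell(Q_2))\le C(c,n)\,\ell(Q_1)$, and the Gehring--Hayman inequality then gives $\ell(\Gamma)\le C_2\,C(c,n)\,\ell(Q_1)$. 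In particular, for any $z\in\Gamma$, $\dist(z,\partial\Omega)\le\dist(x_1,\partial\Omega)+\ell(\Gamma)\le C'(c,n,C_2)\,\ell(Q_1)$, which already furnishes the upper bound on edge lengths of Whitney cubes meeting $\Gamma$.

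The key step is the matching lower bound for $\dist(z,\partial\Omega)$, and this is where the assumed connecting chain and the ball-separation condition enter. The hypothesized chain of Whitney cubes $Q_1=P_0,P_1,\dots,P_k=Q_2$ with $\ell(P_i)>c^{-1}\ell(Q_1)$ can be turned into an actual curve $\eta\subset\Omega$ from $x_1$ to $x_2$ by concatenating segments through the cubes' interiors. Applying the $C_1$-ball-separation condition at an arbitrary point $z\in\Gamma$, the ball $B_\Omega(z,C_1\dist(z,\partial\Omega))$ must meet $\eta$, hence some cube $P_i$. Picking $w\in P_i$ with $\dist_\Omega(z,w)\le C_1\dist(z,\partial\Omega)$, the Whitney property gives $\dist(w,\partial\Omega)\ge \ell(P_i)\ge c^{-1}\ell(Q_1)$, while the triangle inequality for the Euclidean distance yields
\[
c^{-1}\ell(Q_1)\le\dist(w,\partial\Omega)\le \dist_\Omega(z,w)+\dist(z,\partial\Omega)\le (C_1+1)\dist(z,\partial\Omega).
\]
Hence $\dist(z,\partial\Omega)\ge \frac{\ell(Q_1)}{c(C_1+1)}$ uniformly along $\Gamma$, so every Whitney cube that intersects $\Gamma$ has edge length comparable to $\ell(Q_1)$, with constants depending only on $c,n,C_1,C_2$.

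To finish, let $\cf$ be the collection of Whitney cubes meeting $\Gamma$. Since $\Gamma$ is connected and joins $x_1\in Q_1$ to $x_2\in Q_2$, and neighboring cubes on $\Gamma$ share at least a boundary point, these cubes form a chain connecting $Q_1$ to $Q_2$. Their number is bounded: the cubes of $\cf$ are pairwise non-overlapping of edge length $\gtrsim\ell(Q_1)$, and each is contained in the $C''(c,n,C_1,C_2)\ell(Q_1)$-neighborhood of $\Gamma$, whose volume is $\lesssim\ell(\Gamma)\,\ell(Q_1)^{n-1}+\ell(Q_1)^n\lesssim \ell(Q_1)^n$. Dividing by the volume $\gtrsim\ell(Q_1)^n$ of a single cube gives $\#\cf\le C(c,n,C_1,C_2)$. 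Finally,
\[
\dist_{qh}(x_1,x_2)=\int_\Gamma \dist(z,\partial\Omega)^{-1}\,dz\le \frac{c(C_1+1)}{\ell(Q_1)}\,\ell(\Gamma)\le C(c,n,C_1,C_2),
\]
and adding the trivial quasihyperbolic diameters of $Q_1$ and $Q_2$ (which are $O(1)$ by the Whitney property) yields the claimed bound on $\dist_{qh}(Q_1,Q_2)$.

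The main obstacle is the lower bound on $\dist(z,\partial\Omega)$: without it, the geodesic $\Gamma$ could in principle approach $\partial\Omega$ through a corridor of arbitrarily thin Whitney cubes, destroying both the size-comparability and the count. The resolution relies crucially on combining the hypothesized existence of a \emph{fat} connecting chain with the ball-separation condition, which forces any quasihyperbolic geodesic to stay at a scale dictated by that chain. Gehring--Hayman then serves the complementary role of preventing $\Gamma$ from wandering far, keeping the total number of traversed Whitney cubes finite.
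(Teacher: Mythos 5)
Your proof is correct and follows essentially the same route as the paper: Gehring--Hayman controls the length of the quasihyperbolic geodesic, and the ball-separation condition applied against a curve threaded through the hypothesized fat chain forces every point of the geodesic to be at distance $\gtrsim\ell(Q_1)$ from $\partial\Omega$, after which the cube count and the $\dist_{qh}$ bound follow. The only cosmetic difference is that you derive the two-sided bound directly for $\dist(z,\partial\Omega)$ along the geodesic, whereas the paper phrases it as a two-sided bound on $\ell(Q)$ for Whitney cubes $Q$ meeting the geodesic; these are equivalent by the Whitney property.
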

\begin{proof}
The $C_2$-Gehring-Hayman condition together with the assumption
$$\dist_{\Omega}(Q_1,\,Q_2)\le c \ell(Q_1)$$
gives a quasihyperbolic geodesic $\gamma$ connecting $Q_1$ and $Q_2$ such that $\ell(\gamma)\lesssim \ell(Q_1).$  Since $\ell(Q_1)\sim \ell(Q_2)$, the diameters of the Whitney cubes intersecting $\gamma$ are uniformly bounded from above by a multiple of $\ell(Q_1)$. 

Moreover, for every Whitney cube $Q$ with $Q\cap \gamma \neq \emptyset$, by the $C_1$-ball-separation condition and the definition of Whitney cubes, any other curve connecting $Q_1$ and $Q_2$ must intersect $(4\sqrt{n}C_1)_{\Omega} Q$. 
On the other hand, by our assumption, there exists a sequence of cubes  connecting $Q_1$ and $Q_2$ with edge lengths not less than $ c^{-1} \ell(Q_1)$. It follows that $\ell(Q)\gtrsim \ell(Q_1).$ 

To conclude, for all $Q\cap\gamma\neq\emptyset$, $\ell(Q)\sim \ell(Q_1)$ with the constant only depending on $n$, $c$, and $C_1$. Since $\ell(\gamma)\ls \ell(Q_1)$
the number of Whitney cubes intersecting $\gamma$ must be bounded by a constant depending only on $C_1$, $C_2$, $n$ and $c$. 
\end{proof}

\subsection{The construction of the core part of $\Omega$}\label{core of omega}

Fix a bounded domain $\Omega$ which is $\delta$-hyperbolic  as in Lemma~\ref{eqdef} with the associated constants $C_1$ and $C_2$.

For any constant $c>0$ and any Euclidean cube or internal metric ball $Q$ centered at $x$, we introduce the notation
$$(c)_{\Omega} Q=\left\{y\in \Omega\mid \dist_{\Omega}(y,\,x)\le c\diam(Q) \right\}; $$
this is a (relatively) closed inner metric ball inside $\Omega$.

Let $m\in\mathbb N$ be large enough such that there is at least one Whitney cube in $\Omega$ whose edge length is larger than $2^{-m}$. Let $\mathcal W$ be the collection of all Whitney cubes of $\Omega$, and $Q_0\in \mathcal W$ be one of the largest ones. Then define $\Omega_{m,\,0}$ to be the path-component of
$$\bigcup_{Q\in \mathcal W,\,\ell(Q)\ge 2^{-m}} Q$$
with $Q_0\subset \Omega_{m,\,0}$, see Figure \ref{fig:omega_m0}.

\begin{figure}
\centering
\includegraphics[width=0.9\textwidth]{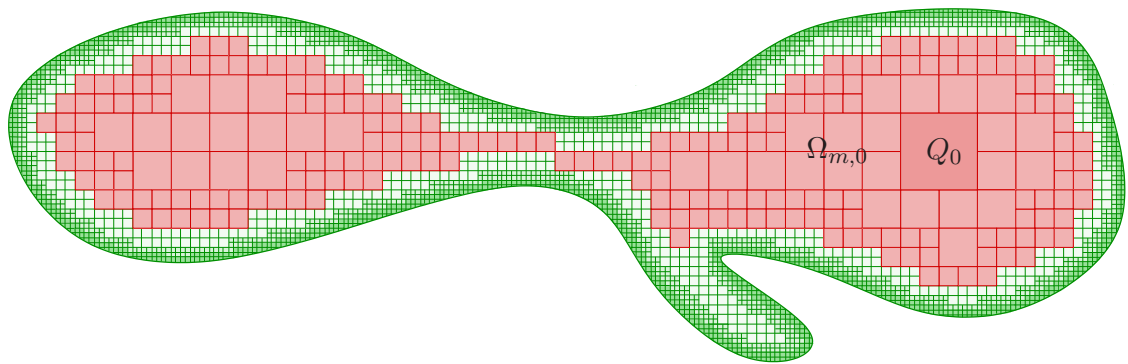}
  \caption{The set $\Omega_{m,\,0}$ is the path-component of the union of cubes of side-length at least $2^{-m}$ that contains $Q_0$.
  In order to have the properties listed in Lemma \ref{core part} for the subdomain $\Omega_m$ we will cut out those parts from $\Omega_{m,\,0}$
  whose connection to $Q_0$ is blocked by dilated boundary cubes.}
  \label{fig:omega_m0}
\end{figure}

Define $\mathcal W_{m,\,0}$ to be the collection of the Whitney cubes in $\mathcal W$ that are contained in $\Omega_{m,\,0}$. 
Also let
$$\mathcal D_{m,\,0}=\left\{Q_i\in \mathcal W \colon  Q_i\subset \Omega_{m,\,0},\, Q_i\cap\partial \Omega_{m,\,0}\neq\emptyset\right\}$$
and 
$$D_{m,\,0}=\bigcup_{Q_i\in \mathcal D_{m,\,0}}Q_i .$$
Notice that, by  definition, any Whitney cube $Q_j\in  \mathcal D_{m,\,0}$ satisfies \begin{equation}\label{equal size}
2^{-m}\le \ell(Q_j)< 2^{-m+2},
\end{equation} 
and thus there are at most finitely many of them since $\Omega$ is bounded. Up to relabeling all the $Q_i$'s in $\mathcal W$ we may assume that all the cubes in $\mathcal D_{m,\,0}$ are ordered consecutively from $1$ to some finite number $N_0$.

Recall the constant $C_1$ in Lemma~\ref{eqdef}. 
We next refine $\Omega_{m,\,0}$ according to the $C_1$-ball separation condition in order to obtain the desired set $\Omega_m$. It is constructed via an induction argument according to the cubes in $\mathcal D_{m,\,0}$.

First for each cube $Q_j\in \mathcal D_{m,\,0}$, we define $U_j=(5\sqrt{n}C_1)_{\Omega}Q_j$. 
Let $m$ be large enough such that $U_j\cap Q_0=\emptyset$. For each $Q_j$ let $Block_j$ (which might be empty) be the union of all the path-components of $\Omega\setminus U_j$ not containing $Q_0$. Roughly speaking, the set $Block_j$ is the collection of points in $\Omega$ whose connection to $Q_0$ is blocked by $U_j$. 
As any curve joining $Q_0$ and some point outside $\Omega_{m,\,0}$ has to pass through $D_{m,\,0}$,  the $C_1$-separation condition allows us to conclude that
\begin{equation}\label{step 0}
\Omega=\Omega_{m,\,0}\cup \bigcup_{Q_j\in \mathcal D_{m,\,0}} U_j \cup \bigcup_{Q_j\in \mathcal D_{m,\,0}} Block_j. 
\end{equation}
Suppose that there exists $Q_k\in \mathcal D_m$ such that $U_j\cap U_k=\emptyset$ and $Block_j\cap U_k\neq \emptyset$. Then by the path-connectedness of $U_k\subset \Omega\setminus U_j$ and the definition of $Block_j$ we conclude that 
\begin{equation}\label{deduction 1}
U_k\subset Block_j. 
\end{equation}

Now let us define 
$$\mathcal W_{m,\,1}=\left\{Q\in\mathcal W_{m,\,0}\colon Q\not\subset \left(Block_1 \setminus (25\sqrt{n}C_1)_{\Omega}Q_1\right) \right\}\subset \mathcal W_{m,\,0}, $$
and  
$$\Omega_{m,\,1}=\bigcup_{Q\in \mathcal W_{m,\,1}} Q \subset \Omega_{m,\,0}. $$
We also define 
$$\mathcal D_{m,\,1}=\left\{Q_i\in \mathcal D_{m,\,0} \colon  Q_i\subset \Omega_{m,\,1},\, Q_i\cap\partial \Omega_{m,\,1}\neq\emptyset\right\}.$$


We claim that
\begin{equation}\label{step 1}
\Omega=\Omega_{m,\,1}\cup \bigcup_{Q_j\in \mathcal D_{m,\,1}} U_j \cup \bigcup_{Q_j\in \mathcal D_{m,\,1}} Block_j.
\end{equation}
Indeed comparing to \eqref{step 0} we have three cases. 

First of all if $y\in Block_k$ with $Q_k\notin \mathcal  D_{m,\,1}$, then 
$$Q_k\subset Block_1\setminus(25\sqrt{n}C_1)_{\Omega}Q_1. $$
This with \eqref{equal size} gives us $U_k\cap U_1=\emptyset$, and consequently $U_k\subset Block_1$ by  \eqref{deduction 1}. Therefore any curve from $y$ to $Q_0$ needs to pass through $U_1$ by the definition of $Block_1$ and the $C_1$-ball-separation condition, and then by definition $y\in Block_{1}$.

Secondly if $y\in U_k$ with $Q_k\notin \mathcal  D_{m,\,1}$, then again
$$Q_k\subset Block_1\setminus(25\sqrt{n}C_1)_{\Omega}Q_1. $$
By the deduction above we similarly conclude that $y\in Block_{1}$.

At last suppose $y\in \Omega_{m,\,0}\setminus\Omega_{m,\,1}$. Then it belongs to some cube $Q$ originally in $\mathcal W_{m,\,0}$ but not in $\mathcal W_{m,\,1}$. Therefore
$$Q\subset Block_1 \setminus (25\sqrt{n}C_1)_{\Omega}Q_1. $$
However $Q$ is connected, and by the argument of \eqref{deduction 1} we also conclude that 
$y\in Block_{1}$. All in all we have shown \eqref{step 1}.


If $Q_2\notin \mathcal D_{m,\,1}$, then we just let $\Omega_{m,\,2}=\Omega_{m,\,1}$ and accordingly define $\mathcal D_{m,\,2}$ and so on. Otherwise, we apply the procedure above, with $Q_1$ replaced by $Q_2$ and $Block_1$ replaced by $Block_2$, to obtain these sets (and collections). We repeat this process for every $Q_j$ with $3\le j\le N_0$. By iteration we finally obtain a set $\Omega_m:=\Omega_{m,\,N_0}$.

Notice that any Whitney cube in $\mathcal W_{m,\,N_0}$ intersecting $\partial \Omega_m$ is contained in $(60\sqrt{n}C_1)_{\Omega}Q_j$ for some $1\le j\le N_0$. Thus it has edge length comparable to $2^{-m}$ with the constant only depending on $n$ and $C_1$. 
Hence there exists a constant $M=M(C_1,\,n)$ such that
$$\Omega_m\subset\subset\Omega_{m'}$$
whenever $m'\ge m+M$. 
The deduction above together with the fact that $\Omega=\bigcup_{m}\Omega_{m,\,0}$ also gives
$$\Omega=\bigcup_{m}\Omega_m.$$
Moreover $\mathcal D_m:=\mathcal D_{m,\,N_0}$ consists of cubes from $\mathcal D_{m,\,0}$. 
To conclude,  we obtain the following lemma.

\begin{lem}\label{core part}
Let $\Omega$ be a bounded  domain which is $\delta$-Gromov hyperbolic with respect to the quasihyperbolic metric, $\mathcal W=\{Q_j\}$ be the collection of Whitney cubes of $\Omega$ and $Q_0$ be one of the largest Whitney cubes. Then there exists a sequence of sets $\Omega_m\subset\subset \Omega$ such that by setting
$$\mathcal B_m=\left\{Q_j\in \mathcal W \colon  Q_j \subset \Omega_{m},\,  Q_j\cap\partial \Omega_{m}\neq\emptyset\right\},$$
by letting $U_j=(5\sqrt{n}C_1)_{\Omega}Q_j$
for  each $Q_j\in \mathcal B_m,$ and by finally defining $Block_j$ 
(which might be empty) to be the union of all the path-components of $\Omega\setminus U_j$ not containing $Q_0$, 
we have the following properties.
\begin{enumerate}[\rm 1)]
\item Each $\Omega_m$ consists of finitely many Whitney cubes and any two of them can be joined by a chain of Whitney cubes in $\Omega$ of edge lengths not less than $2^{-m}$. Moreover $Q_0\subset \Omega_m$ and there exists a constant $M=M(\delta,\,n)$ such that
$$\Omega=\bigcup_{m}\Omega_m,$$
and
$$\Omega_m\subset\subset\Omega_{m'}$$
for any $m'\ge m+M$. 
\item For every Whitney cube $Q_j\in \mathcal B_m$ we have $2^{-m} \le \ell(Q_j)\ls 2^{-m}$. We call such a cube  a {\it boundary cube of $\Omega_m$}. 
\item There exists a subcollection $\mathcal D_m$ of  $\mathcal B_m$ such that for each $Q_k\in \mathcal B_m$ and $Q_j\in \mathcal D_m$
$$Q_k\cap Block_j\neq \emptyset \ \Rightarrow \ Q_k\subset (60\sqrt{n}C_1)_{\Omega}Q_j.$$ 
Moreover $\{(60\sqrt{n}C_1)_{\Omega}Q_j\}_{Q_j\in \mathcal D_m}$ covers all the boundary cubes of $\Omega_m$.  
\item We have
\begin{equation*}
\Omega=\Omega_{m}\cup \bigcup_{Q_j\in \mathcal D_{m}} U_j \cup \bigcup_{Q_j\in \mathcal D_{m}} Block_j. 
\end{equation*}
\end{enumerate}
\end{lem}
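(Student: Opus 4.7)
The construction of $\Omega_m = \Omega_{m,N_0}$ has already been laid out in Subsection \ref{core of omega}, so my plan is to verify the four enumerated properties by inspecting what the iterative construction produces, rather than building anything new. First I would fix the constants: the base collection $\mathcal W_{m,0}$ consists only of cubes of edge length $\ge 2^{-m}$ inside the bounded domain $\Omega$, so it is finite; each subsequent $\mathcal W_{m,j}$ is obtained by removing a subfamily, so $\mathcal W_{m,N_0}$ is finite too. The chain-connectedness claim in (1) follows because $\Omega_{m,0}$ is the path-component containing $Q_0$ of a finite union of cubes of size $\ge 2^{-m}$, and at each refinement step we only discard subsets of $Block_j \setminus (25\sqrt n C_1)_\Omega Q_j$ which by construction are separated from $Q_0$; the remaining cubes of $\Omega_{m,j}$ thus stay chain-connected to $Q_0$ through the cubes in $U_j$, whose edge lengths are $\ge 2^{-m}$ by \eqref{equal size} and the third Whitney property.

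For the $\subset\subset$ nesting in (1), I would argue that each cube in $\mathcal B_m$ is contained in $(60\sqrt n C_1)_\Omega Q_j$ for some $Q_j \in \mathcal D_m \subset \mathcal D_{m,0}$ (which is precisely property (3)), and hence its edge length is comparable to $2^{-m}$; thus $\dist(\Omega_m,\partial\Omega) \gtrsim 2^{-m}$. Picking $M=M(C_1,n)$ large enough that every cube of edge $\ge 2^{-m-M}$ intersecting $\Omega_m$ lies inside $\Omega_{m'}$ for $m' \ge m+M$ then gives $\Omega_m \subset\subset \Omega_{m'}$. The identity $\Omega = \bigcup_m \Omega_{m,0}$ is immediate because every point of $\Omega$ lies in some Whitney cube, and Lemma~\ref{uniform bound} applied to $Q_0$ and that cube shows it gets absorbed into $\Omega_{m,0}$ for large $m$; combining with the nesting shows $\Omega = \bigcup_m \Omega_m$. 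Property (2) then follows from \eqref{equal size} for the original boundary cubes, and from the containment $Q_k \subset (60\sqrt n C_1)_\Omega Q_j$ for cubes that became boundary cubes only after refinement, since $U_j$ has diameter comparable to $\ell(Q_j) \sim 2^{-m}$.

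Property (4) is the cleanest: I would iterate \eqref{step 1}, which was already proved for $j=1$, replacing $Q_1$ by each successive $Q_k$ in $\mathcal D_{m,0}$. At each step the same three case analysis (points in an unused $Block_k$, points in an unused $U_k$, points in $\Omega_{m,j-1}\setminus\Omega_{m,j}$) shows via \eqref{deduction 1} and the $C_1$-ball-separation condition that anything removed is reabsorbed into one of the retained $Block_j$'s. Property (3) falls out of the bookkeeping of this iteration: $\mathcal D_m$ is declared to be precisely the subfamily of $\mathcal D_{m,0}$ whose cubes still meet $\partial\Omega_m$, and by construction every point of $\Omega_m$ outside the $(25\sqrt n C_1)_\Omega Q_j$'s is separated from the discarded $Block_j \setminus (25\sqrt n C_1)_\Omega Q_j$ by a buffer of cubes, so any $Q_k \in \mathcal B_m$ meeting some $Block_j$ must in fact lie within $(60\sqrt n C_1)_\Omega Q_j$ (the factor $60$ absorbs the buffer width plus the size of $Q_k$).

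The main obstacle I anticipate is the bookkeeping in property (3), specifically pinning down the correct dilation constant: one has to track how the $C_1$-ball-separation condition interacts with the $25\sqrt n C_1$ inner dilation used to cut, the $5\sqrt n C_1$ dilation defining $U_j$, and the extra $\ell(Q_k) \sim 2^{-m}$ of slack from the boundary cube itself. The choice $60\sqrt n C_1$ is made to be safely larger than $25\sqrt n C_1 + O(1)$ so that any boundary cube meeting $Block_j$ is engulfed. Once this constant is verified to work at a single refinement step, induction on $j$ propagates the statement to $\mathcal D_m = \mathcal D_{m,N_0}$, and the covering statement in (3) follows from (4) restricted to boundary cubes.
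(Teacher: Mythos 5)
Your plan is the right one: the paper does not give a separate proof of Lemma~\ref{core part} --- the iterative construction in Subsection~\ref{core of omega} \emph{is} the proof, and the lemma simply records what that construction produces. Your verification matches the paper's reasoning for properties (2)--(4), including the observation that property (4) comes from iterating \eqref{step 1} with the three-case analysis and that the constant $60\sqrt n C_1$ is chosen generously enough to absorb the $25\sqrt n C_1$ inner dilation, the $5\sqrt n C_1$ of $U_j$, and the cube size $\sim 2^{-m}$.

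Two of your intermediate justifications for property (1) are off, although neither is needed for the conclusion. First, you argue that the chain-connectedness in (1) survives the refinement because cubes in $U_j$ have edge length $\ge 2^{-m}$; this is false in general, since $U_j = (5\sqrt n C_1)_\Omega Q_j$ is an inner metric ball which can reach $\partial\Omega$ and hence meet arbitrarily small Whitney cubes. But the worry is moot: the lemma only asks for a chain of cubes of size $\ge 2^{-m}$ \emph{in $\Omega$}, not in $\Omega_m$, and every cube of $\Omega_m$ lies in $\Omega_{m,0}$, which is by construction a path-component of the union of all such cubes --- so any two cubes of $\Omega_m$ are automatically chainable through $\Omega_{m,0}$. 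Second, you invoke Lemma~\ref{uniform bound} to show $\Omega = \bigcup_m \Omega_{m,0}$; that lemma requires comparable cube sizes and a chain with edge lengths bounded below by a multiple of $\ell(Q_1)$, hypotheses you don't have for an arbitrary small cube and $Q_0$. The correct elementary argument is that any cube $Q$ can be joined to $Q_0$ by a finite chain of Whitney cubes, and choosing $m$ so that $2^{-m}$ is below the smallest edge length in that chain puts $Q$ inside $\Omega_{m,0}$. With these two fixes your verification is complete and matches the paper.
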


The property 3) above turns out to be crucial later and it may fail for
$\mathcal B_m$; this is the reason for introducing the subcollection 
$\mathcal D_m$ of $\mathcal B_m$. 

  \begin{figure}
\centering
\includegraphics[width=0.9\textwidth]{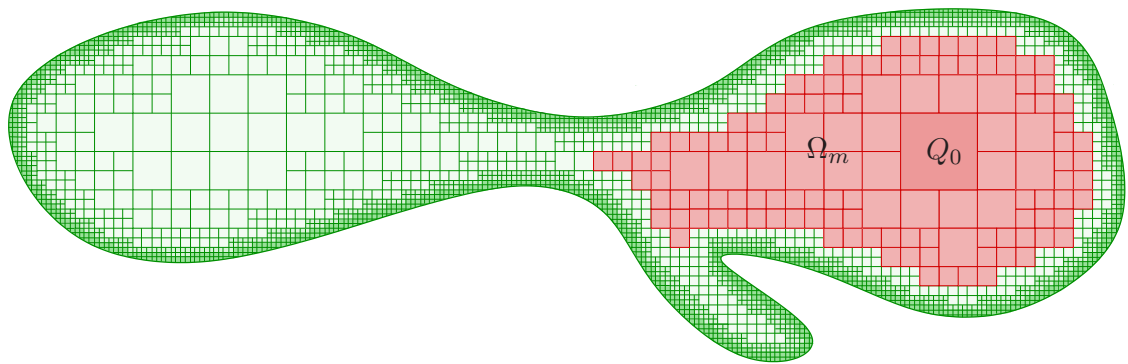}
  \caption{The set $\Omega_m$ obtained after the iterative procedure from sets $\Omega_{m,i}$ still contains the cube $Q_0$.}
  \label{fig:omega_m}
 \end{figure}

\subsection{The decomposition of the boundary layer}

In this subsection we first decompose $\Omega\setminus\Omega_m$ into two main parts $E_m$ and $F_m$, and then make further decompositions of them.
 
First of all let 
$$E_m=\left(\bigcup_{Q_j\in \mathcal D_m} (70\sqrt{n}C_1)_{\Omega} Q_j\right)\setminus \Omega_m. $$
Secondly, we denote by $F_m$ the rest of $\Omega$, that is,
$$F_m=\Omega\setminus(\Omega_m\cup E_m). $$
Notice that by Lemma~\ref{core part} we have
$$\dist_{\Omega}(F_m,\,\Omega_m)\ge 2^{-m}, $$ 
and 
$$F_m\subset \bigcup_{Q_j\in \mathcal D_m} Block_j$$
where the set $Block_j$
 is defined in Lemma~\ref{core part}. 

By abuse of notation, we also denote by  $E_m$ and $F_m$ their closures with respect to the topology of $\Omega$, respectively. 
Observe that  the boundary of $(c)_{\Omega} Q$ in $\Omega$ is porous and hence
of Lebesgue measure zero,
\begin{equation}\label{finite boundary}
|\partial (c)_{\Omega} Q\cap \Omega|=0,
\end{equation}
for each 
$Q\in \mathcal W$ and each $c.$
Therefore we have 
$$|\Omega_m\cap E_m|=|E_m\cap F_m|=0.$$

\subsubsection{The decomposition of $E_m$}

We decompose $E_m$ further. Recall that
$$E_m\subset \bigcup_{Q_j\in \mathcal D_m}(71\sqrt{n}C_1)_{\Omega} Q_j.$$
Let $V_j=(71\sqrt{n}C_1)_{\Omega} Q_j$ for each $Q_j\in \mathcal D_m$. For simplicity we again assume that
$$\mathcal D_m=\{Q_1,\,\dots,\,Q_N\}$$
with some $N\le N_0$.
We claim that for each fixed $V_j$, 
\begin{equation}\label{finite q}
\#\left\{1\le k\le N \colon V_j\cap  V_k\neq \emptyset\right\}\le C(n,\,C_1), 
\end{equation}
where $\#$ means the cardinality of the corresponding set.
Indeed, if  $V_j\cap V_k\neq \emptyset$, then $\dist_{\Omega}(Q_k,\,Q_j)\lesssim 2^{-m}$ by the definition of $V_j$. Then \eqref{finite q} follows by the fact that  $\ell(Q_j)\sim 2^{-m}$ with a constant independent of $j$.

Define $S_1=V_1\cap E_m$, and inductively for $j\ge 2$ set
$$S_j=\left( V_j\setminus \bigcup_{i=1}^{j-1} V_i \right)\cap E_m.$$
Notice that $S_j$ may well be disconnected, or even empty. We replace every $S_j$ by its closure with respect to the topology of $\Omega$, and still use the notation $S_j$. 
Notice that after all these changes,  $V_j,\,S_j$ still satisfy all the corresponding properties above; especially $$E_m\subset \bigcup_{j}S_j.$$
By \eqref{finite q} for each $S_j$ 
\begin{equation}\label{finite s}
\#\left\{1\le k\le N\colon  S_j\cap S_k\neq \emptyset\right\}\le C(n,\,C_1),
\end{equation}
and the corresponding $Q_j,\,Q_k\in \mathcal D_m$ satisfy
$$\dist_{\Omega}(Q_j,\,Q_k)\lesssim 2^{-m}.$$
Similar reasons also give the fact that   
\begin{equation}\label{finite r s}
\#\left\{k\in\mathbb N\colon Q_j\in \mathcal B_m,\, Q_j\cap S_k\neq \emptyset\right\}\le C(n,\,C_1).
\end{equation}
At last we remark that for any $j,\,k$
$$|S_j\cap S_k|=0$$
by \eqref{finite boundary}. Moreover by the definition of $S_j$ we have
\begin{equation}\label{S cap F}
\diam_{\Omega}(S_j)\lesssim 2^{-m}. 
\end{equation}

\subsubsection{The decomposition of $F_m$}

Recall that $Q_0$ is one of the largest Whitney cubes contained in $\Omega_m$, and for each $Q_j\in \mathcal D_m$ we have $U_j=(5 \sqrt n C_1)_{\Omega} Q_j$ and $V_j=(71\sqrt{n}C_1)_{\Omega} Q_j$. 

   \begin{figure}
\centering
\includegraphics[width=0.9\textwidth]{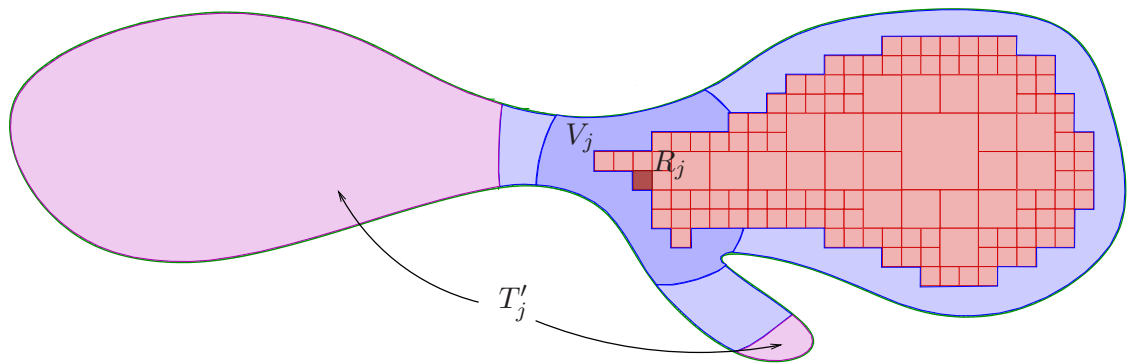}
  \caption{The set $T_j'$ consist of all the path-components of $F_m$ for which the set $U_j$ centered at $Q_j$ blocks all curves going from the path-component to the cube $Q_0$.}
  \label{fig:TjRj}
 \end{figure}

To decompose the last part $F_m$, we introduce the following notation. Recall the definition of $Block_j$ in Lemma~\ref{core part} and  define 
$$T'_j =Block_j\cap F_m. $$
See Figure~\ref{fig:TjRj}. 
Certainly $T'_j$ could be empty. We replace $T'_j$ by its closure with  respect to the topology of $\Omega$ and still denote it by $T'_j$.  Notice that by Lemma~\ref{core part}
$$F_m=\bigcup_{j} T'_j. $$

Fix  $T'_j$ and suppose that $T'_j\cap S_k\neq \emptyset$. We claim that $V_k\cap V_j \neq\emptyset$. Indeed, if $V_k\cap V_j =\emptyset$, then $V_k\cap U_j =\emptyset$ and by the path-connectedness of $V_k$ any point $y\in T'_j\cap V_k$ can be connected to $Q_k\in\mathcal D_m$ by a path in 
$$V_k\subset \Omega\setminus U_j .$$ 
If $Q_k$ can be  connected to $Q_0$ via a path in $\Omega\setminus U_j$, then $y$ can be connected to  $Q_0$ via a path in $\Omega\setminus U_j$, which leads to a contradiction to the definition of $Block_j$, which contains $T'_j$. Then our claim follows. 
If $Q_k$ cannot be connected to $Q_0$ via any path in $\Omega\setminus U_j$, then $Q_k\subset Block_j$, and by Lemma~\ref{core part} we know that
$$\dist_{\Omega}(Q_k,\,Q_j)\le 60\sqrt{n} C_1\diam(Q_j). $$
The claim follows from the definition of $V_j$ .

Therefore by the proof of \eqref{finite q} and the definition of $S_k$ we conclude that, for each fixed (non-empty) $T'_j$, 
\begin{equation}\label{finite t'}
\#\left\{1\le k\le N\colon  T'_j\cap S_k\neq \emptyset\right\}\le C(n,\,C_1). 
\end{equation} 
Also note that if $y\in T'_j\cap T'_k$, then the path-component of $T'_j$ containing $y$ is a subset of $T'_j\cap T'_k$ by the definition of $T'_j$.

We define $T_1=T'_1 $,  and for $j\ge 2$ set
$$T_j=T'_j\setminus \bigcup_{i=1}^{j-1}T'_i. $$
We also refer by $T_j$ to its closure with  respect to the topology of $\Omega$.

According to \eqref{finite t'} for each fixed non-empty $T_j$
\begin{equation}\label{finite t s}
\#\left\{1 \le k\le N\colon  T_j\cap S_k\neq \emptyset\right\}\le C(n,\,C_1). 
\end{equation}
Similarly for each fixed $S_j$
\begin{equation}\label{finite s t}
\#\left\{1\le k\le N\colon  S_j\cap T_k\neq \emptyset\right\}\le C(n,\,C_1).
\end{equation}

To conclude from the subsections above, whenever $Q_j\cap S_k\neq \emptyset$ with $Q_j\in \mathcal B_m$, $S_j\cap S_k\neq \emptyset$ or $ T_j\cap S_k\neq \emptyset$ we always have the corresponding $Q_j,\,Q_k\in \mathcal B_m$ satisfying
$$\dist_{\Omega}(Q_j,\,Q_k)\lesssim 2^{-m}.$$
This fact with Lemma~\ref{core part} allows us to apply Lemma~\ref{uniform bound} later.

\subsection{A partition of unity}

We construct a partition of unity in this subsection. To this end, let us introduce the following notation. For a set $A\subset \Omega$, we define
$$\mathcal N_{m,\,\Omega}(A)=\left\{x\in\Omega \colon \dist_{\Omega}(x,\,A)\le 2^{-m-5}\right\}. $$

\begin{lem}\label{partition}
With all the notations above, there are functions $\psi$, $\phi_j$ and $\varphi_j$ with $1\le j\le N$ such that: 
\begin{enumerate}[\rm 1)]
\item The function $\psi$ is Lipschitz in $\Omega$, compactly supported in $\Omega_m$, $0\le \psi \le1$, and $|\nabla \psi(x)|\lesssim 2^{-m}$.
\item For each $j$, we have $\phi_j\in W^{1,\,\infty}(\Omega)$. The support of $\phi_j$ is relatively closed in $\Omega$ and contained in $\mathcal  N_{m,\,\Omega}(S_j)$, $0\le \phi_j\le1$, and $|\nabla \phi_j|\lesssim  2^{m}$.  
\item For each $j$, we have  $\varphi_j\in W^{1,\,\infty}(\Omega)$. The support of $\varphi_j$ is relatively closed in $\Omega$ and contained in $\mathcal N_{m,\,\Omega}(T_j)$, $0\le \varphi_j\le1$, and $|\nabla \varphi_j|\lesssim  2^{m}$.  
\item $\psi(x)+\sum_j \phi_j(x)+\sum_j \varphi_j(x)= 1$ for any $x\in \Omega$. 
\end{enumerate}
\end{lem}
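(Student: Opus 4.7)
The strategy is to follow the standard cutoff-and-normalize recipe, working throughout in the inner metric $\dist_{\Omega}$ so as to respect the geometry of $\Omega$.

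First I would introduce preliminary Lipschitz bumps defined through the inner distance. For each $1\le j\le N$ set
$$\widetilde\phi_j(x) := \max\bigl\{0,\,1 - 2^{m+5}\dist_{\Omega}(x,S_j)\bigr\}, \qquad \widetilde\varphi_j(x) := \max\bigl\{0,\,1 - 2^{m+5}\dist_{\Omega}(x,T_j)\bigr\},$$
and, writing $K := \{x\in\Omega_m : \dist_{\Omega}(x,\Omega\setminus\Omega_m)\ge 2^{-m-5}\}$,
$$\widetilde\psi(x) := \max\bigl\{0,\,1 - 2^{m+5}\dist_{\Omega}(x,K)\bigr\}.$$
Each bump takes values in $[0,1]$; by construction $\widetilde\phi_j$ and $\widetilde\varphi_j$ are supported in $\mathcal N_{m,\Omega}(S_j)$ and $\mathcal N_{m,\Omega}(T_j)$ respectively, while every $x\in\Omega\setminus\Omega_m^o$ satisfies $\dist_{\Omega}(x,K)\ge 2^{-m-5}$ so that $\widetilde\psi$ is compactly supported in $\Omega_m$. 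Since $\dist_{\Omega}$ coincides with the Euclidean distance on any Euclidean ball contained in $\Omega$, each bump is locally $2^{m+5}$-Lipschitz in the Euclidean metric, and hence has Euclidean gradient $\lesssim 2^m$ almost everywhere.

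Next I would show that $\Sigma := \widetilde\psi + \sum_j \widetilde\phi_j + \sum_j \widetilde\varphi_j$ satisfies $1\le\Sigma\le C(n,C_1)$ on $\Omega$. The upper bound is immediate from the finite-overlap inequalities \eqref{finite s}, \eqref{finite t s}, \eqref{finite s t}, which bound the number of neighborhoods $\mathcal N_{m,\Omega}(S_j), \mathcal N_{m,\Omega}(T_j)$ meeting any given $x$. For the lower bound I would argue by cases based on $\Omega = \Omega_m\cup E_m\cup F_m$ with $E_m=\bigcup_j S_j$ and $F_m=\bigcup_j T_j$, the sets $S_j,T_j$ being closed in $\Omega$: if $x\in E_m$ then $\widetilde\phi_j(x)=1$ for some $j$; if $x\in F_m$ then $\widetilde\varphi_j(x)=1$ for some $j$; if $x\in\Omega_m$ with $\dist_{\Omega}(x,\Omega\setminus\Omega_m)\ge 2^{-m-5}$ then $\widetilde\psi(x)=1$; and finally if $x\in\Omega_m$ with $d:=\dist_{\Omega}(x,\Omega\setminus\Omega_m)<2^{-m-5}$, the inclusion $\partial\Omega_m\cap\Omega\subset\bigcup_j(S_j\cup T_j)$ (which follows from $\partial\Omega_m\cap\Omega\subset\overline{E_m\cup F_m}$ together with closedness of each $S_j,T_j$) provides a point $y\in\partial\Omega_m\cap\Omega$ at inner distance $d$ from $x$ and lying in some $S_j$ (or $T_j$); then $\widetilde\psi(x)+\widetilde\phi_j(x)\ge 2^{m+5}d + (1-2^{m+5}d)=1$.

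Setting $\psi:=\widetilde\psi/\Sigma$, $\phi_j:=\widetilde\phi_j/\Sigma$, and $\varphi_j:=\widetilde\varphi_j/\Sigma$ produces the partition. Property 4) is immediate; the range and support conditions of 1)--3) pass to the quotients; and the quotient rule, combined with $\Sigma\ge 1$, $|\nabla\Sigma|\lesssim 2^m$ (again by the same finite overlap applied to the gradients), and the gradient bounds on the numerators, yields $|\nabla\psi|,|\nabla\phi_j|,|\nabla\varphi_j|\lesssim 2^m$ as desired. The main obstacle in this plan is the lower bound on $\Sigma$ near $\partial\Omega_m$: one must match the buffer width used in $\widetilde\psi$ with the $2^{-m-5}$-neighborhood scale used for $\widetilde\phi_j,\widetilde\varphi_j$, and then invoke Lemma~\ref{core part}(3) together with the construction of the $S_j, T_j$ in Section~2 to guarantee that the nearest piece of $\partial\Omega_m$ is actually hit by some $S_j$ or $T_j$.
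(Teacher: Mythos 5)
Your proposal follows the same basic recipe as the paper's proof: inner-distance cutoff bumps, an overlap bound giving $\Sigma\lesssim 1$, and normalization by $\Sigma$. However, there is a genuine gap in your lower bound for $\Sigma$. In your final case you assert $\widetilde\psi(x)\ge 2^{m+5}d$, which (given your definition of $\widetilde\psi$) amounts to $\dist_{\Omega}(x,K)\le 2^{-m-5}-d$. This would require the function $g(\cdot)=\dist_{\Omega}(\cdot,\Omega\setminus\Omega_m)$ to increase at unit speed along the shortest inner path from $x$ to $K$, which is false in general: $\Omega_m$ is a finite union of Whitney cubes and therefore has corners, and near a convex corner $g$ behaves locally like $\min(a_1,\dots,a_n)$ in suitable coordinates, which grows only at rate $1/\sqrt{n}$ toward $K$. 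Thus $\dist_\Omega(x,K)$ can be about $\sqrt{n}\,(2^{-m-5}-d)$, making $\widetilde\psi(x)=0$ for small $d$ while $\widetilde\phi_j(x)=1-2^{m+5}d<1$, so $\widetilde\psi+\widetilde\phi_j<1$. The lower bound you claim is not proved, and the unjustified step is exactly the obstacle you yourself flag at the end.

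The paper sidesteps this entirely by a simpler choice: $\psi(x)=\min\{2^{m+8}\dist_{\Omega}(x,E_m\cup F_m),\,1\}$, i.e.\ the cutoff for $\Omega_m$ is built from the distance to $E_m\cup F_m$ directly (not from the distance to an inner core $K$), and with a weight $2^{m+8}$ deliberately larger than the weight $2^{m+6}$ used in $\phi_j,\varphi_j$. Then the lower bound is immediate: if $d=\dist_{\Omega}(x,E_m\cup F_m)$, the nearest $S_j$ or $T_j$ is within distance $d$, so $\psi(x)+\phi_j(x)\ge 2^{m+8}d+(1-2^{m+6}d)\ge1$ for all $d\ge0$, with no corner subtlety. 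To repair your proof, drop the set $K$, define $\widetilde\psi$ via $\dist_{\Omega}(\cdot,E_m\cup F_m)$ (or $\Omega\setminus\Omega_m$), and use a weight for $\widetilde\psi$ at least as large as the one in $\widetilde\phi_j,\widetilde\varphi_j$. The rest of your argument (upper bound from the finite-overlap inequalities, the quotient rule, support and gradient bounds) is in line with the paper's.
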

\begin{proof}
First of all we construct cut-off functions for each of our sets via the distance functions with respect to the inner metric. The function $\phi_j$  can be defined as
$$\phi_j(x)=\max\left\{1-2^{m+6}\dist_{\Omega}(x,\,S_j),\,0\right\}, $$
and similarly
$$\varphi_j(x)=\max\left\{1-2^{m+6}\dist_{\Omega}(x,\,T_j),\,0\right\}. $$ 
 The function $\psi$ is defined by
$$\psi(x)=\min\left\{2^{m+8}\dist_{\Omega}(x,\,E_m\cup F_m),\,1 \right\}. $$
It is obvious that these functions satisfy 
$$\psi(x)+\sum_j \phi_j(x)+\sum_j \varphi_j(x)\ge 1 $$
for every $x\in \Omega$. 

Note that by the essence of \eqref{finite r s}, \eqref{finite s}, \eqref{finite t s} and \eqref{finite s t} we have for each  $Q_j\in \mathcal B_m$
\begin{equation}\label{finite r ns}
\#\left\{1 \le k\le N \colon,\, Q_j\cap \mathcal N_{m,\,\Omega}(S_k)\neq \emptyset\right\}\le C(n,\,C_1),
\end{equation}
and also for each  $Q_j\in \mathcal D_m$
\begin{align}
&\#\left\{1 \le k\le N \colon  \mathcal N_{m,\,\Omega}(S_j)\cap \mathcal N_{m,\,\Omega}(S_k)\neq \emptyset\right\}\le C(n,\,C_1),\label{finite ns}\\
&\#\left\{1 \le k\le N \colon  \mathcal N_{m,\,\Omega}(S_j)\cap \mathcal N_{m,\,\Omega}(T_k)\neq \emptyset\right\}\le C(n,\,C_1),\label{finite ns nt}\\
&\#\left\{1 \le k\le N \colon  \mathcal N_{m,\,\Omega}(T_j)\cap \mathcal N_{m,\,\Omega}(S_k)\neq \emptyset\right\}\le C(n,\,C_1). \label{finite nt ns}
\end{align}
Hence by the decomposition of $\Omega$ we conclude that for any $x\in\Omega$
$$\chi_{\Omega}(x)\le \Phi(x):=\psi(x)+\sum_{j}\phi_j(x)+\sum_{j}\varphi_j(x)\lesssim \chi_{\Omega}(x).$$
Therefore, by  dividing $\psi,\,\phi_j ,\,\varphi_j $ by $\Phi$, respectively, we obtain the desired partition of unity. The new functions, still denoted by $\psi,\,\phi_j ,\,\varphi_j $, satisfy the desired gradient control as $\Phi$ is bounded from below and above. 
\end{proof}
Notice that 
\begin{equation}\label{measure control}
|\mathcal N_{m,\,\Omega}(S_j)\setminus S_j |\ls 2^{-mn}
\end{equation}
 by \eqref{S cap F} uniformly in $j.$  
Moreover the proof of \eqref{finite t'} also shows that there are uniformly finitely many $V_k$ such that  $V_k$ intersects $N_{m,\,\Omega}(T_j)\setminus T_j$. Thus 
\begin{equation}\label{measure control 2}
|\mathcal N_{m,\,\Omega}(T_j)\setminus T_j|\ls 2^{-mn}. 
\end{equation}
Furthermore by an argument similar to the proof of \eqref{finite t'}, for every $1\le j \le N$ we conclude that
\begin{equation}\label{finite nt nt}
\#\left\{1\le k\le N\colon  \mathcal N_{m,\,\Omega}(T_j)\cap \mathcal N_{m,\,\Omega}(T_k)\neq \emptyset\right\}\le C(n,\,C_1). 
\end{equation}

\section{Proof of Theorem~\ref{mainthm}}

\begin{proof}[Proof of Theorem~\ref{mainthm}]
Fix $\ez>0$. Also fix $u\in W^{1,\,p}(\Omega)$ with $1\le p<\infty$. 
We may assume that $u$ is smooth and bounded since bounded smooth functions 
are dense in $W^{1,\,p}(\Omega)$; 
e.g. see the proof of \cite[Lemma 2.6]{KZ2016}. 
We may further assume that $\|u\|_{L^{\infty}(\Omega)}=1$. 

 Recall that $\Omega=\Omega_m\cup E_m \cup F_m$. 
Define $D'_m$ to be the union of those Whitney cubes $Q\in \mathcal W$  for 
which there exists a chain of no more than $M$ Whitney cubes joining $Q$ to 
some cube in $\mathcal B_m.$ Here the constant $M$ that depends on 
$C_1,\,C_2,\,n$ will be determined later.
Then the quasihyperbolic distance from $Q$ to $\cup_{Q\in \mathcal B_m}Q$ is
uniformly bounded if $Q\subset D'_m.$ 
Observe that,  for any Whitney cube  $Q\subset D'_m$ we have
$$\ell(Q)\ls 2^{-m}, $$
with a constant depending on $C_1,\,C_2,\,n$. Also notice that Lemma~\ref{core part} implies $|\Omega\setminus \Omega_m|\to 0$. Thus for $m\in\mathbb N$ large enough we have
\begin{equation}\label{requirement}
\|u\|^p_{W^{1,\,p}(D'_m\cup E_m\cup F_m)}\le \ez \ \text{ and  }\  |E_m\cup F_m|\le \ez. 
\end{equation}

Notice that $u|_{\Omega_m}\in W^{1,\,\infty}(\Omega_m)$ since $\Omega_m$ is compact and $u$ is smooth. 
We define a function $u_m$ on $\Omega$ by setting
$$u_m(x)=u(x)\psi(x)+\sum_{j} a_j \phi_j(x)+\sum_j a_j\varphi_j(x),$$
where $\varphi(x)$, $\phi_j(x)$ and $\psi_j(x)$ are the functions in Lemma~\ref{partition} and $$a_j=\bint_{Q_j} u(x)\, dx$$  is the integral average over $Q_j\in \mathcal B_m$.

It is obvious that $u_m\in W^{1,\,\infty}(\Omega)$ by our construction, since by boundedness of $\Omega$ we only have finitely many $Q_j\in \mathcal B_m$ and Lemma~\ref{partition} gives the estimates on the derivatives. 
Moreover we have $\|u_m\|_{L^{\infty}(\Omega)}\le 1$ by our assumption, Lemma~\ref{partition} and the definition of $u_m$. Hence 
$\|u_m\|_{L^{p}(\Omega\setminus\Omega_m)}\le \ez$  by \eqref{requirement}.
Consequently, by the definition of $\mathcal N_{m,\,\Omega}(\cdot)$ and Lemma~\ref{partition}, we only need to show that 
\begin{equation*}\label{aim for k}
\int_{(\Omega\setminus\Omega_m)\cup (\bigcup_{Q\in \mathcal B_m }Q)}|\nabla u_m|^p\, dx \lesssim \ez. 
\end{equation*}
We will show this via the Poincar\'e inequality, Lemma~\ref{uniform bound} and Lemma~\ref{partition}.

We write $G(Q_j,\,Q_k)$ for the union of the cubes given by Lemma~\ref{uniform bound} for each pair $Q_j,\,Q_k\in  \mathcal B_m$. 
Recall that $\dist_{\Omega}(F_m,\,\Omega_m)\ge 2^{-m}. $ 
Then for any $Q_j\in \mathcal B_m$ with the associated average $a_j$, by \eqref{finite r ns}, \eqref{measure control}, Lemma~\ref{uniform bound}, Lemma~\ref{partition} and the Poincar\'e inequality we obtain that
\begin{align*}
&\int_{Q_j} |\nabla u_m|^p \, dx \lesssim \int_{Q_j} |\nabla (u_m-a_j)|^p \, dx \\
\lesssim& \int_{Q_j} |\nabla [ (u(x)-a_j)\psi(x)]|^p \, dx +\sum_{\substack{S_k\subset E_m\\ Q_j \cap \mathcal  N_{m,\,\Omega}(S_k) \neq\emptyset}} \int_{Q_j} |\nabla [(a_k-a_j) \phi_k(x) ]|^p \, dx \\
\lesssim& \int_{Q_j} |\nabla u|^p + |u(x)-a_j|^p 2^{mp}\, dx +\sum_{\substack{S_k\subset E_m\\ Q_j \cap \mathcal N_{m,\,\Omega}(S_k) \neq\emptyset}} \int_{Q_j} |a_k-a_j|^p 2^{mp} \, dx  \\
\lesssim& \int_{Q_j} |\nabla u|^p\, dx + \sum_{\substack{S_k\subset E_m\\ Q_j \cap \mathcal N_{m,\,\Omega}(S_k) \neq\emptyset}} 2^{-m(n-p)} 2^{-m(p-n)} \int_{G(Q_j,\, Q_k)}|\nabla u|^p \,dx\\
\lesssim& \int_{Q_j} |\nabla u(x)|^p \, dx+ \sum_{\substack{S_k\subset E_m\\ Q_j \cap\mathcal  N_{m,\,\Omega}(S_k) \neq\emptyset}} \int_{G(Q_j,\, Q_k)}|\nabla u|^p \,dx. 
\end{align*}
Notice that by Lemma~\ref{uniform bound} there are uniformly finitely many cubes contained in the chain $G(Q_j,\, Q_k)$ connecting $Q_j$ and $Q_k$ if $\mathcal  N_{m,\,\Omega}(S_k)  \cap Q_j \neq\emptyset$.

On the other hand recall that $\psi(x)$ is compactly supported in $\Omega_m$. Then for each $S_j$, Lemma~\ref{uniform bound}, \eqref{S cap F}, \eqref{finite ns}, \eqref{finite ns nt}, \eqref{measure control} and the Poincar\'e inequality give
\begin{align*}
&  \int_{S_j} |\nabla u_m|^p \, dx \lesssim  \int_{S_j} |\nabla (u_m-a_j)|^p\, dx \\
 \lesssim& \sum_{\substack{S_k\subset E_m\\  \mathcal N_{m,\,\Omega}(S_j)\cap \mathcal N_{m,\,\Omega}(S_k) \neq\emptyset}} \int_{S_j} |\nabla [ (a_k-a_j) \phi_k(x) ]|^p \, dx \\
&\qquad + \sum_{\substack{T_k\subset F_m\\  \mathcal N_{m,\,\Omega}(S_j)\cap \mathcal N_{m,\,\Omega}(T_k) \neq\emptyset}} \int_{S_j} |\nabla [ (a_k-a_j) \varphi_k(x) ]|^p \, dx\\
\lesssim& \sum_{\substack{S_k\subset E_m\\  \mathcal N_{m,\,\Omega}(S_j)\cap \mathcal N_{m,\,\Omega}(S_k) \neq\emptyset}}  \int_{S_j} |a_k-a_j|^p 2^{mp} \, dx + \sum_{\substack{T_k\subset F_m\\  \mathcal N_{m,\,\Omega}(S_j)\cap \mathcal N_{m,\,\Omega}(T_k) \neq\emptyset}} \int_{S_j}  |a_k-a_j|^p 2^{mp} \, dx\\
\lesssim& \sum_{\substack{S_k\subset E_m\\  \mathcal N_{m,\,\Omega}(S_j)\cap \mathcal N_{m,\,\Omega}(S_k) \neq\emptyset}} 2^{-m(p-n)} 2^{-m(n-p)}\int_{G(Q_j,\, Q_k)} |\nabla u|^p  \, dx\\
&\qquad +\sum_{\substack{T_k\subset F_m\\  \mathcal N_{m,\,\Omega}(S_j)\cap \mathcal N_{m,\,\Omega}(T_k) \neq\emptyset}} 2^{-m(p-n)} 2^{-m(n-p)}\int_{G(Q_j,\, Q_{k})} |\nabla u|^p  \, dx\\
\lesssim& \sum_{\substack{S_k\subset E_m\\  \mathcal N_{m,\,\Omega}(S_j)\cap \mathcal N_{m,\,\Omega}(S_k) \neq\emptyset}} \int_{G(R_j,\, R_k)} |\nabla u|^p  \, dx+\sum_{\substack{T_k\subset F_m\\  N_{m,\,\Omega}(S_j)\cap \mathcal N_{m,\,\Omega}(T_k) \neq\emptyset}} \int_{G(Q_j,\, Q_{k})} |\nabla u|^p  \, dx. 
\end{align*}

The calculation for $T_j$ is almost the same. Indeed  by \eqref{finite nt ns}, \eqref{measure control 2}, \eqref{finite nt nt} and the Poincar\'e inequality
\begin{align*}
&  \int_{T_j} |\nabla u_m|^p \, dx \lesssim  \int_{T_j} |\nabla (u_m-a_j)|^p\, dx \\
 \lesssim& \sum_{\substack{S_k\subset E_m\\  \mathcal N_{m,\,\Omega}(T_j)\cap \mathcal N_{m,\,\Omega}(S_k) \neq\emptyset}} \int_{T_j} |\nabla [ (a_k-a_j) \phi_i(x) ]|^p \, dx \\
&\qquad + \sum_{\substack{T_k\subset F_m\\  \mathcal N_{m,\,\Omega}(T_j)\cap \mathcal N_{m,\,\Omega}(T_k) \neq\emptyset}} \int_{T_j} |\nabla [ (a_k-a_j) \phi_i(x) ]|^p \, dx  \\
\lesssim& \sum_{\substack{S_k\subset E_m\\  \mathcal N_{m,\,\Omega}(T_j)\cap \mathcal N_{m,\,\Omega}(S_k) \neq\emptyset}}  \int_{T_j} |a_k-a_j|^p 2^{mp} \, dx +\sum_{\substack{T_k\subset F_m\\  \mathcal N_{m,\,\Omega}(T_j)\cap \mathcal N_{m,\,\Omega}(T_k) \neq\emptyset}}  \int_{T_j} |a_k-a_j|^p 2^{mp} \, dx \\
 \lesssim&  \sum_{\substack{S_k\subset E_m\\  \mathcal N_{m,\,\Omega}(T_j)\cap \mathcal N_{m,\,\Omega}(S_k)\neq\emptyset}} 2^{-m(p-n)} 2^{-m(n-p)}\int_{G(Q_{j},\, Q_j)} |\nabla u|^p  \, dx \\
&\qquad + \sum_{\substack{T_k\subset F_m\\  \mathcal N_{m,\,\Omega}(T_j)\cap \mathcal N_{m,\,\Omega}(T_k)\neq\emptyset}} 2^{-m(p-n)} 2^{-m(n-p)}\int_{G(Q_{j},\, Q_j)} |\nabla u|^p  \, dx\\
 \lesssim& \sum_{\substack{S_k\subset E_m\\  \mathcal N_{m,\,\Omega}(T_j)\cap \mathcal N_{m,\,\Omega}(S_k)\neq\emptyset}} \int_{G(Q_{j},\, Q_k)} |\nabla u|^p  \, dx + \sum_{\substack{T_k\subset F_m\\  \mathcal N_{m,\,\Omega}(T_j)\cap \mathcal N_{m,\,\Omega}(T_k)\neq\emptyset}} \int_{G(Q_{j},\, Q_k)} |\nabla u|^p  \, dx. 
\end{align*}

By Lemma~\ref{uniform bound}, there is a constant $C_3=C_3(C_1,\,C_2,\,n)$ such that, for any chain of cubes $G(Q_{j},\, Q_k)$ used above the number of cubes involved is uniformly bounded from above by $C_3$. This gives us the constant $M$
in the definition of $D'_m$. 

Sum over all the $Q_j$'s, $S_j$'s and $T_j$'s above. Notice that, since the number of Whitney cubes in any chain $G(Q_j,\,Q_k)$ above is always uniformly 
bounded by Lemma~\ref{uniform bound}, the  Whitney cubes involved in our
sums have uniformly finite overlaps. 
Additionally all the cubes in these chains are contained in $D'_m$. 
Thus we obtain \eqref{requirement} and  conclude the first part of the theorem. 

When $\Omega$ is quasiconvex, we immediately have that $C^{\infty}(\mathbb R^n)$ is dense in $W^{1,\,p}(\Omega)$ since every function in $W^{1,\,\infty}(\Omega)$ can then be extended to a global Lipschitz function;  by applying suitable 
cut-off functions and via a diagonal argument we obtain the 
approximation by smooth functions.

The argument for the Jordan domain case is similar to the proof of \cite[Corollary 1.2]{KZ2016}. 
Recall that for any two non-empty subsets $X$ and $Y$ of $\mathbb R^n$, the  {\it Hausdorff distance} $\dist_{\mathrm H}(X,\, Y)$ is defined as
$$\dist_{\mathrm H}(X,\,Y) = \max\{\,\sup_{x \in X} \inf_{y \in Y} d(x,\,y),\, \sup_{y \in Y} \inf_{x \in X} d(x,\,y)\}. $$

When $\Omega$ is Jordan, we can construct a 
sequence 
of Lipschitz domains $\{G_s\}_{s=1}^{\infty}$ approaching $\Omega$ in Hausdorff 
distance such that $\Omega\subset \subset G_{s+1}\subset \subset G_{s}$
and
$$\dist_{\mathrm H}(G_s,\,\partial \Omega)\le 2^{-s}$$
 for each $s\in\mathbb N$. 
For example, by the Morse-Sard theorem we may define $G_s$ via the boundary of 
a suitable lower level set of $d$, where $d$ is a smooth function obtained by 
applying suitable mollifiers and a partition of unity for 
$\mathbb R^n\setminus \Omega$ to the distance function $\dist(x,\, \Omega)$.

Now  fix $m\in \mathbb N$ and choose $s$ such that $s\ge 2m$. Then, by the 
definition of $G_s,$  the $2C_1$-separation condition with respect to $G_s$ 
holds for our original cubes in $\Omega_m$. Similarly for points with 
inner distance smaller than a multiple of $2^{-m}$ in $\Omega_m$, 
the $2C_2$-Gehring-Hayman condition with respect to $G_s$ still  holds. 
Moreover, the original Whitney cubes contained in $\Omega_m$ are also  
Whitney-type for $G_s$ up to a multiplicative constant $2$ in
 Lemma~\ref{Whitney}. 
 Therefore we may  repeat all the arguments above similarly to extend the 
function $u_m$ from $\Omega_m$ to $v_m\in W^{1,\,p}(G_s)$, with
$$\|u-v_m\|_{W^{1,\,p}(\Omega)}\ls \ez. $$
Since each $G_s$ is a Lipschitz domain, we may extend $v_m$ to a global 
Sobolev function, and then by applying suitable mollifiers and via a 
diagonal argument we obtain the approximation by global smooth functions. 
\end{proof}

\section{Proof of Theorem~\ref{example}}
When $n\ge3$, unlike in the planar case, simply connectivity does not 
guarantee that $W^{1,\,\infty}(\Omega)$ be dense in $W^{1,\,p}(\Omega)$ for 
$1\le p<\infty$.  
Indeed, given $1<p<\infty$ there exists a simply connected bounded domain 
$\Omega\subset \mathbb R^3$ such that even $W^{1,\,p}(\Omega)$ is not dense in 
$W^{1,\,q}(\Omega)$ for $1\le q<p$. 

Towards this, let us recall the definition of removable sets. A closed set $E\subset \mathbb R^n$ with Lebesgue measure zero is said to be {\it removable for $W^{1,\,p}$} if $$W^{1,\,p}(\mathbb R^n)=W^{1,\,p}(\mathbb R^n\setminus E)$$
in the sense of sets. In \cite[Theorem A]{K1999}, for any $1< p\le n$, Koskela gave an example  of  a compact set $E\subset \mathbb R^n$ which is removable for $W^{1,\,p} $ but not for $W^{1,\,q} $ with $1\le q<p$. We give a related planar example for every $1<p<\infty$. 

\begin{thm}\label{remove}
Let $1<p<\infty$. Then there is a compact set $E\subset \mathbb R^2$ of Lebesgue measure zero  such that  $E$ is removable for $W^{1,\,q} $ when $p< q<\infty$ but not for $W^{1,\,q} $ when $1\le q\le p$.
\end{thm}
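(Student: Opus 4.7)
The plan is to construct a self-similar Cantor-type compact set $E\subset\mathbb R^2$ of Lebesgue measure zero whose geometric parameters are tuned to the exponent $p$, generalizing \cite[Theorem A]{K1999} from the range $1<p\le n$ to all $1<p<\infty$ in the planar case. The two regimes $1<p<2$ and $p\ge 2$ require somewhat different constructions, but in both cases the heart of the argument is a balance between a capacity-type estimate on $E$ (giving removability for $q>p$) and an explicit ``obstruction'' function on the complement $\mathbb R^2\setminus E$ (giving non-removability for $q\le p$).

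Concretely, for $1<p<2$, I would take $E$ to be a self-similar Cantor dust built inductively, with $E_k$ a disjoint union of $N^k$ axis-parallel squares of side $\lambda^k$, where $N\ge 2$ and $\lambda\in(0,1)$ are chosen so that $N\lambda^{2-p}=1$, making the Hausdorff dimension of $E=\bigcap_k E_k$ exactly $2-p$. Piecewise-linear cutoffs $\eta_k$ supported on a thin neighborhood of $E_k$ yield the key capacity estimate $\|\nabla\eta_k\|_{L^q(\mathbb R^2)}^q\lesssim N^k\lambda^{k(2-q)}$, which vanishes precisely when $q>p$; by the classical characterization of $W^{1,q}$-removable sets via $q$-capacity, this gives removability. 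Non-removability for $q\le p$ is witnessed by a function $u$ oscillating between two distinct constants on the two sides of each generation-$k$ branch of $E$, with smooth transitions supported in the gaps: telescoping the generation-wise Dirichlet energies and using the scaling $N\lambda^{2-p}=1$ shows $u\in W^{1,q}(\mathbb R^2\setminus E)$ precisely at the threshold $q=p$, while an ACL argument on transverse lines prevents any $W^{1,q}(\mathbb R^2)$-extension from reconciling the two boundary values.

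For $p\ge 2$, the pure dust construction must be replaced by a more elaborate tree-like arrangement of thin arcs or tubes, whose widths and lengths at scale $k$ are calibrated so that the ``necks'' between components of a neighborhood of $\mathbb R^2\setminus E$ force the complement to fail the relevant Sobolev-Poincar\'e inequality exactly at the exponent $p$, while still admitting cutoff functions with $L^q$-vanishing gradients for all $q>p$. The obstruction function in this regime is no longer a step-type jump (since $W^{1,q}$ functions are H\"older continuous for $q>2$) but a carefully designed function whose gradient concentrates in the narrowest necks in an $L^q$-integrable way exactly when $q\le p$.

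The main obstacle is precisely this regime $p\ge 2$: here removability is not a clean capacity statement and is instead governed by a combination of capacity and the geometric ``connectivity'' of $\mathbb R^2\setminus E$ at each scale. Calibrating the tube widths so that one and the same compact $E$ simultaneously witnesses non-removability for every $q\in[1,p]$ and removability for every $q\in(p,\infty)$, while ensuring uniform sharpness of both the capacity estimate and the ACL/trace-type obstruction across the borderline $q=p$, is the most delicate technical point and the place where the construction must deviate most significantly from \cite{K1999}.
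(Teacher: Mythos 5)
Your proposal has genuine gaps in both regimes, and it does not match the paper's construction.

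For $1<p\le 2$ the paper (Lemma~\ref{koskela}) follows \cite{K1999} closely: $E$ is a \emph{linear} Cantor set $E\subset[0,1]\times\{0\}$ of \emph{positive} $\mathcal H^1$-measure, with complementary gaps $I_j$ calibrated so that $\sum_j\mathcal H^1(I_j)^{2-p}<\infty$ while $E$ is $q$-porous for every $q>p$. The positivity of the length is what makes the jump function an honest obstruction: a.e.\ vertical line through $E$ has the jump in its trace, violating the ACL property. Your proposal instead takes a planar Cantor dust of Hausdorff dimension $2-p<1$, which has null linear projections, so a.e.\ line misses $E$ and the ACL obstruction you invoke simply evaporates; a dust also has no ``two sides'' across which a jump can be imposed. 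Moreover, the assertion that removability for $q>p$ follows ``by the classical characterization of $W^{1,q}$-removable sets via $q$-capacity'' is not available: removability in the sense $W^{1,q}(\mathbb R^n\setminus E)=W^{1,q}(\mathbb R^n)$ is \emph{not} a capacity condition. Zero $(1,q)$-capacity is sufficient but far from necessary, and sets of positive capacity can be removable; the actual criterion used here is geometric (porosity/curve conditions), which is exactly the content of \cite{K1999} and of the $p=2$ variant in the paper.

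For $p>2$ your plan is, by your own admission, only a heuristic (``thin arcs or tubes'' with ``necks'' defeating a Sobolev--Poincar\'e inequality), and this is the part where the real work lies. The paper's Lemma~\ref{tapio} produces a concrete set $E=C\times F$, where $C$ is a symmetric Cantor set with contraction ratios $\lambda_i\sim 2^{(p-1)/(2-p)}$ and $F$ is a fat Cantor set built from gaps $\beta_i=(1-\lambda_{i+1})P_i$. Non-removability for $q\le p$ does not come from a capacity or neck argument but from an explicit Cantor-staircase function $u$ on $\mathbb R^2\setminus E$ whose gradient lives on strips $S_i$ of $2^i$ wedges of size $P_i$; the scaling is tuned so that $\int|\nabla u|^p<\infty$, while for $q>2$ the Sobolev embedding forces continuity and thus collides with the fact that $u$ restricted to a line $\{y=y_0\}$, $y_0\in F$, is a (non--absolutely-continuous) Cantor function, violating the ACL property. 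Removability for $q>p$ is proved by verifying the curve integral criterion $\int_\gamma \dist(z,E)^{1/(1-q)}\,ds(z)\lesssim|z_1-z_2|^{(q-2)/(q-1)}$ (Shvartsman, or \cite{K1998}), not by a capacity bound. None of these three ingredients --- the product structure with a fat Cantor factor, the staircase/ACL obstruction, and the extension-domain curve criterion --- appear in your sketch, so the $p>2$ part would need to be rebuilt from scratch.
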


By taking the union of a suitable collection of scaled and translated
copies $E_i$ of the above compact sets corresponding to an increasing sequence
of $p_i$ tending to a fixed $p$ we obtain the following corollary.

\begin{cor} Let $1<p<\infty.$ Then there is a compact set 
$E\subset \mathbb R^2$ of Lebesgue measure zero  such that  $E$ is removable 
for  $W^{1,\,q} $  when $q\ge p$ but not for $W^{1,\,q} $ when $1\le q<p.$
\end{cor}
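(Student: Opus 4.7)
My plan is to follow the author's hint: stack scaled copies of the sets produced by Theorem~\ref{remove} for an increasing sequence $p_i\nearrow p$ inside disjoint balls that accumulate at a single point. Concretely, I would pick a strictly increasing sequence $p_i\in(1,p)$ with $p_i\to p$ and, by Theorem~\ref{remove}, obtain compact sets $E_i'\subset\mathbb R^2$ of Lebesgue measure zero that are removable for $W^{1,q}$ exactly when $q>p_i$. Fixing a point $x_0\in\mathbb R^2$ and pairwise disjoint closed balls $\overline B(c_i,2r_i)\subset\mathbb R^2\setminus\{x_0\}$ with $c_i\to x_0$ and $r_i\to 0$, I would place a translated-and-dilated copy $\tilde E_i\subset B(c_i,r_i)$ of $E_i'$ in each ball. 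Since removability for $W^{1,q}$ is preserved under rigid motions and dilations of $\mathbb R^2$, every $\tilde E_i$ inherits the removability profile of $E_i'$. I then set
$$E=\{x_0\}\cup\bigcup_{i\in\mathbb N}\tilde E_i,$$
which is compact (the only accumulation point of $\bigcup_i\tilde E_i$ outside itself is $x_0\in E$) and of Lebesgue measure zero.

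For non-removability in the range $1\le q<p$: since $p_i\to p>q$, I pick $i$ with $p_i\ge q$. Theorem~\ref{remove} then supplies $f\in W^{1,q}(\mathbb R^2\setminus\tilde E_i)\setminus W^{1,q}(\mathbb R^2)$, and because $\tilde E_i\subset E$ the very same $f$ lies in $W^{1,q}(\mathbb R^2\setminus E)$ while still failing to lie in $W^{1,q}(\mathbb R^2)$. Hence $E$ is not removable for $W^{1,q}$.

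For removability in the range $q\ge p$: I take $f\in W^{1,q}(\mathbb R^2\setminus E)$, viewed as a function defined a.e.\ on $\mathbb R^2$. Because $q>p_i$ for every $i$, each $\tilde E_i$ is removable for $W^{1,q}$, and by a standard cutoff argument this localises: on every ball $B(c_i,2r_i)$, $f$ agrees a.e.\ with an element of $W^{1,q}(B(c_i,2r_i))$. Since the balls $B(c_i,2r_i)$ are pairwise disjoint and avoid $x_0$, I patch these local extensions to deduce $f\in W^{1,q}(\mathbb R^2\setminus\{x_0\})$. Finally I invoke the classical removability of a singleton for $W^{1,q}(\mathbb R^2)$ (vanishing $(1,q)$-capacity when $1\le q\le 2$; Morrey continuity when $q>2$) to conclude $f\in W^{1,q}(\mathbb R^2)$.

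The main obstacle I expect is precisely this global patching step: one must ensure that countably many local $W^{1,q}$-extensions across the $\tilde E_i$, which cluster at $x_0$, combine into a bona fide element of $W^{1,q}(\mathbb R^2\setminus\{x_0\})$. The cleanest route is to test against arbitrary $\varphi\in C_c^\infty(\mathbb R^2\setminus\{x_0\})$: its compact support meets only finitely many of the balls $B(c_i,2r_i)$, and on each of these the local removability of $\tilde E_i$ legitimises integration by parts with no boundary contribution, producing the required weak-derivative identity. Once that is in place, singleton removability at $x_0$ is routine, and the corollary follows.
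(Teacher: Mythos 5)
Your proof is correct and fleshes out exactly the one-sentence construction the paper indicates (scaled translated copies of the sets from Theorem~\ref{remove} for $p_i\nearrow p$, accumulating at a point); the compactness, the non-removability for $q<p$ by restriction, the localization of removability to disjoint balls, and the final removal of the singleton via a partition-of-unity/cutoff argument are all sound. One small point worth spelling out in a final write-up is that, after localizing to the balls and to the complement of the closed inner balls, testing against $\varphi\in C_c^\infty(\mathbb R^2\setminus\{x_0\})$ and a finite subordinate partition of unity yields $f\in W^{1,q}(\mathbb R^2\setminus\{x_0\})$, after which the classical removability of a point (zero $(1,q)$-capacity for $q\le 2$, ACL/Morrey for $q>2$) finishes the argument, just as you say.
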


We divide the proof of Theorem~\ref{remove} into two lemmas. 

\begin{lem}\label{koskela}
Let $1<p\le 2$. Then there is a compact set $E\subset \mathbb R^2$ of Lebesgue measure zero such that  $E$ is removable for $W^{1,\,q} $ when $p< q<\infty$ but not for $W^{1,\,q} $ when $1\le q\le p$.
\end{lem}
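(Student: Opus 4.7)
\textbf{Proof plan for Lemma~\ref{koskela}.} The idea is to realize $E$ as a fat Cantor set placed on a line segment in $\mathbb R^2$, with the gap lengths tuned to $p$. Concretely, take $E=C\times\{0\}$ where $C\subset[0,1]$ is built iteratively: at stage $k$ we keep $2^k$ closed intervals of equal length $\ell_k$, chosen so that $|C|=\lim_k 2^k\ell_k>0$ and the $2^{k-1}$ new gaps at stage $k$ all have length $g_k\asymp A\,k^{-a}\,2^{-k\gamma}$ with $\gamma=(p-1)/(2-p)>0$, $a=2/(2-p)$, and $A>0$ small enough that $\sum_k 2^{k-1}g_k<1$ so that $|C|>0$. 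A direct computation then gives
\begin{equation*}
\sum_k 2^{k-1}g_k^{\,2-q}\asymp \sum_k 2^{k((q-1)-\gamma(2-q))}\,k^{-a(2-q)},
\end{equation*}
where the exponent on $2$ vanishes exactly at $q=p$; the polynomial correction ensures convergence at the endpoint $q=p$ and the exponential blow-up gives divergence for every $q>p$. Since $|E|_2=0$ trivially and $\mathcal H^1(E)=|C|>0$ by construction, $E$ has the right two-scale structure.

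For non-removability when $1\le q\le p$, I would exhibit a ``jump function'' $u\in W^{1,q}(\mathbb R^2\setminus E)$ that fails to extend. Take $u=0$ on $\{y<-1\}\cap ([0,1]\times\mathbb R)$ and $u=1$ on $\{y>1\}\cap([0,1]\times\mathbb R)$, smoothly interpolated on vertical lines over $C$ in a neighbourhood of $E$ only through the gaps: for each gap $I_j$ of $C$ of length $g$ place a smooth cutoff on the box $I_j\times(-g,g)$ taking $u$ from $0$ below to $1$ above, with $|\nabla u|\lesssim 1/g$. The gradient energy is then $\int_{\mathbb R^2\setminus E}|\nabla u|^q\lesssim\sum_k 2^{k-1}g_k^{2-q}$, finite for $q\le p$ by construction. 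Because $\mathcal H^1(E)>0$ and $u$ has a unit jump across $E$ in the trace sense, any candidate extension $\tilde u\in L^q(\mathbb R^2)$ has distributional gradient of the form $\nabla u+\mathcal H^1\mres E\otimes e_2$, whose singular part is nonzero; hence $\tilde u\notin W^{1,q}(\mathbb R^2)$.

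For removability when $q>p$, I would argue that no function in $W^{1,q}(\mathbb R^2\setminus E)$ can sustain a nontrivial trace jump across $E$, so the obvious pointwise extension is the Sobolev extension. Indeed, if $u\in W^{1,q}(\mathbb R^2\setminus E)$ had traces $u_\pm$ on $E$ from above and below with $u_+\ne u_-$ on a set of positive $\mathcal H^1$-measure, then a Poincar\'e-type estimate over each gap shows that the cost of bridging the jump $\eta$ in a gap of length $g$ is at least $\eta^q g^{2-q}$; summing and using $\int_E|u_+-u_-|^q\,d\mathcal H^1>0$ forces $\int|\nabla u|^q\gtrsim \sum_k 2^{k-1}g_k^{2-q}=\infty$ when $q>p$, a contradiction. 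Hence $u_+=u_-$ $\mathcal H^1$-a.e.\ on $E$, the pointwise extension lies in $W^{1,q}(\mathbb R^2)$ by the ACL-characterisation (both conditions on a.e.\ vertical and horizontal line are now automatic), and $E$ is removable.

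The main obstacle I expect is handling the critical case $q=p$: the geometric series ratio equals $1$ there, so convergence/divergence is controlled entirely by the slowly varying factor $k^{-a(2-q)}$, and both sides of the dichotomy (existence of the jump test-function at $q=p$ and divergence for every $q>p$) must be arranged simultaneously. A secondary point is justifying the trace framework on $E=C\times\{0\}$ itself—this is not a Lipschitz graph, so the ``trace from above/below'' must be set up with some care, presumably via ACL on vertical fibres and a Fuglede-type argument for the family of vertical segments hitting $C$, whose $q$-modulus switches from positive to zero precisely at $q=p$ by the same combinatorial estimate on $g_k$.
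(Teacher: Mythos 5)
Your high-level plan matches the paper's: realize $E$ as a fat Cantor set on a line segment, tune the gap lengths to $p$ with a polynomial correction so the critical series converges precisely at $q=p$, use a jump test function to show non-removability for $q\le p$, and a porosity-type argument for removability when $q>p$. The paper delegates both criteria to \cite{K1999} (Proposition 2.1 for removability via $q$-porosity; Theorems 2.2--2.3 for non-removability via the condition $\sum_j\mathscr H^1(I_j)^{2-p}<\infty$), so your explicit jump/trace arguments are heuristic re-derivations of machinery that is already cited.

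However, there is a genuine error in your parameter choice, partially masked by an algebra slip. With $g_k\asymp k^{-a}2^{-k\gamma}$ and $2^{k-1}$ gaps at level $k$, one has $2^{k-1}g_k^{2-q}\asymp k^{-a(2-q)}2^{k(1-\gamma(2-q))}$; the exponent on $2$ is $1-\gamma(2-q)$, not the $(q-1)-\gamma(2-q)$ you wrote (the two agree only at $q=2$). For the exponent to vanish at $q=p$ you therefore need $\gamma=\frac{1}{2-p}$, which is the paper's choice (the paper takes gaps of length $s\,i^{-\frac{2}{2-p}}2^{-\frac{i+1}{2-p}}$), not your $\gamma=\frac{p-1}{2-p}$. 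Your two errors cancel in your check that the exponent vanishes at $q=p$, but the construction itself is broken: with $\gamma=\frac{p-1}{2-p}$ the total gap length $\sum_k 2^{k-1}g_k\asymp\sum_k 2^{k(1-\gamma)}k^{-a}$ diverges whenever $1<p<\tfrac32$ (so no such Cantor set can fit in $[0,1]$), and even when it fits, $\sum_k 2^{k-1}g_k^{2-p}\asymp\sum_k k^{-2}2^{k(2-p)}$ diverges for all $1<p<2$, so the jump test function has infinite $W^{1,p}$-energy and non-removability at $q=p$ is not established. Finally, the endpoint $p=2$ is included in the lemma but cannot be handled by any polynomial gap law ($\gamma=\frac{1}{2-p}\to\infty$); the paper switches to exponential gaps $g_i=s\,2^{-i}\exp(-2^i)$ there, which your proposal does not address.
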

\begin{proof}
The proof essentially follows from the proof of \cite[Theorem A]{K1999}. 

We first consider the case where $1<p<2$. By  \cite[Proposition 2.1, Theorem 2.2, Theorem 2.3]{K1999} it suffices to construct a Cantor set $E\subset [0,\,1]$ of positive length so that, by letting $I_j$ be the complementary intervals of $E$ on $[0,\,1]$ and $\mathscr H^1$ the $1$-dimensional Hausdorff measure,
$$\sum_{j=1}^{\infty}\mathscr H^1(I_j)^{2-p}<\infty, $$
while $E$ is $q$-porous for all $p<q\le 2$. Recall that $E\subset [0,\,1]$ is 
$q$-porous if for $\mathscr H^1$-almost every point $x\in E$ there is a sequence of numbers $r_i$ and a constant $C_x$ such that $r_i\to 0$ as $i\to \infty$, and each interval $[x-r_i,\,x+r_i]$ contains an interval $I_i\subset [0,\,1]\setminus E$ with $\mathscr H^1(I_i)\ge C_x r_i^{\frac 1 {2-q}}$. 

Towards this construction, we let $0<s<\frac 1 3$ be a small constant to be determined momentarily. Out set $E$ is obtained via the following Cantor construction. At the $i$-th step with $i\in \mathbb N$ we delete an open interval of length $si^{-\frac 2 {2-p}} 2^{-\frac {i+1} {2-p}}$ from the middle of  each of the remaining $2^i$ closed intervals with equal length, respectively. Then E is defined as the intersection of all these closed intervals, and $s$ is chosen such that
$$\sum_{i}s 2^ii^{-\frac 2 {2-p}} 2^{-\frac {i+1} {2-p}}<1. $$
Thus $E$ has positive length, and it is not difficult to check that $E$ has the desired properties. 

When $p=2$ we similarly construct $E$ by removing intervals of length $s2^{-i}\exp(-2^i)$ with sufficiently small (and fixed) $s$ at $i$-th step. Then by the proof of \cite[Theorem A]{K1999} and \cite[Theorem 3.1]{K1999}, $E$ is not $q$-removable for any $1<q\le 2$. The removability of $E$ for $q>2$ comes from \cite[Proposition 2.1]{K1999}  again. 
\end{proof}

\begin{lem}\label{tapio}
Let $2<p<\infty$. Then there is a compact set $E\subset \mathbb R^2$ of Lebesgue measure zero such that  $E$ is removable for $W^{1,\,q} $ when $p< q<\infty$ but not for $W^{1,\,q} $ when $1\le q\le p$.
\end{lem}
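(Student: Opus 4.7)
The plan is to construct $E\subset \mathbb R^2$ as a compact fractal-like set of measure zero, built iteratively at geometrically decreasing scales, calibrated so that $p$ is precisely the critical exponent for $W^{1,q}$-removability. The construction underlying Lemma~\ref{koskela} cannot simply be reused: it only yields critical exponents in $(1,2]$, and moreover any compact subset of a single line in $\mathbb R^2$ is automatically removable for every $q>2$. Indeed, by Morrey's embedding a $W^{1,q}$-function off such a set is continuous, and the intersection of the set with almost every vertical line is either empty or a single point, across which absolute continuity is preserved. We therefore need a genuinely two-dimensional obstruction that acts at arbitrarily small scales; a natural model is to take $E$ to be a Cantor-type set whose complementary bounded pieces are joined to the exterior through narrow tubes of width $g_i$, with $N_i$ tubes at each generation $i$.

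For non-removability when $1\le q\le p$, I would exhibit $u\in W^{1,q}(\mathbb R^2\setminus E)$ by assigning distinct constants $a_i$ to the complementary pieces at generation $i$ and smoothly interpolating across the tubes. A Poincar\'e-type estimate in each tube of width $g_i$ yields
$$\|\nabla u\|_{L^q(\mathbb R^2\setminus E)}^q \ls \sum_i N_i |a_i|^q g_i^{2-q},$$
and the geometric parameters $(N_i, r_i, g_i)$ are chosen so that this series is finite for some nontrivial $(a_i)$ precisely when $q\le p$. Such a $u$ cannot be extended to $W^{1,q}(\mathbb R^2)$, because a continuous extension across infinitely many nonzero jumps would violate the ACL characterization along almost every line crossing the relevant tubes.

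For removability when $q>p$, I would use a capacity-cutoff scheme: choose $\eta_k\in C_c^\infty(\mathbb R^2)$ supported in shrinking neighborhoods of $E$, equal to $1$ in a smaller one, with $\|\nabla\eta_k\|_{L^q(\mathbb R^2)}\to 0$ as $k\to\infty$. The calibration of $E$ guarantees the existence of such a sequence precisely when $q>p$. Then $(1-\eta_k)u$ extends by zero to an element of $W^{1,q}(\mathbb R^2)$, and a Cauchy subsequence argument (first for bounded $u$, then for general $u$ by truncation) produces the desired $W^{1,q}(\mathbb R^2)$-extension of $u$. The main obstacle is the simultaneous calibration of both thresholds at the same exponent $p$: the lower bound coming from the explicit test function and the upper bound coming from the cutoff estimate must be critical together, which forces a rather delicate choice of the geometric parameters, possibly combined with Whitney-chain reasoning in the spirit of Lemma~\ref{uniform bound} to handle the correlations between tubes at different generations.
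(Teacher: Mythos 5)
The central difficulty in this lemma is the removability direction for $q>p>2$, and your proposed argument for it cannot work. You suggest a capacity-type cutoff scheme: smooth $\eta_k$ supported near $E$, equal to $1$ on a smaller neighborhood, with $\|\nabla\eta_k\|_{L^q(\mathbb R^2)}\to 0$. But for $q>2$ in $\mathbb R^2$, Morrey's embedding gives
$$|\eta_k(x)-\eta_k(y)|\le C\,|x-y|^{1-2/q}\,\|\nabla\eta_k\|_{L^q(\mathbb R^2)},$$
and taking $x\in E$ (where $\eta_k=1$) and $y$ just outside the support (where $\eta_k=0$) forces $\|\nabla\eta_k\|_{L^q}\gtrsim |x-y|^{2/q-1}$. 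As the neighborhoods shrink this lower bound blows up rather than vanishing; equivalently, \emph{every} nonempty compact set has positive $q$-capacity when $q>2$. So the cutoff sequence you need simply does not exist, and this is not a technical gap to be patched by choosing better geometric parameters --- the mechanism itself is ruled out in the supercritical range $q>2$. You already used Morrey's embedding correctly to argue that a curve in a line is removable for $q>2$; that same embedding is exactly what defeats the cutoff approach.

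The paper takes a fundamentally different route to removability: it shows $\mathbb R^2\setminus E$ is a $W^{1,q}$-extension domain by verifying the Shvartsman-type curve condition
$$\int_\gamma \dist(z,E)^{\frac{1}{1-q}}\,ds(z)\le C\,|z_1-z_2|^{\frac{q-2}{q-1}}$$
for each pair of points, which for $q>2$ is the right replacement for a capacity estimate. The construction is also different: rather than tubes joining complementary regions, the paper takes $E=C\times F$ with $C$ a thin Cantor set (dimension $<1$) and $F$ a fat Cantor set (positive length), calibrated so that the Cantor step function in the $x$-variable lies in $W^{1,q}$ off $E$ precisely for $q\le p$, while the product structure and the dimension of $C$ give the quasiconvexity-with-weight estimate needed for $q>p$. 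Your non-removability sketch (piecewise constants joined across narrow gaps, obstruction via ACL and continuity of $W^{1,q}$ extensions) is in the same spirit as the paper's Step 2, but without the product structure and a concrete parameter choice making $p$ simultaneously the critical exponent for both the test-function estimate and the curve estimate, the claim that both thresholds coincide at $p$ remains unverified. The paper fixes $\lambda_i$ by the explicit relation $2^{i(1-p)}P_i^{2-p}=i^{-3}$, which is what makes both computations close at exactly $p$.
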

\begin{proof} 
We separate our proof into three steps. 

\noindent\textbf{Step 1: The construction of $E$. }
The set $E$ is defined as a product set $C\times F$, where $C\subset \mathbb R$ is a Cantor set of Hausdorff dimension less than $1$ and $F\subset \mathbb R$ is a Cantor set with positive Lebesgue measure, called a {\it fat Cantor set}. 

Let us start with the construction of $C$. Given a sequence $\{\lambda_i\}_{i\in \mathbb N_+}$ with $0<\lambda_i<\frac 1 2$, we build a symmetric Cantor set  with  $\lambda_i$ as the contraction ratio at step $i$. More precisely, define
\[
 C = \bigcap_{i=0}^\infty C_i,
\]
where $C_0 = I_{0,1} = [0,1]$ and $C_i$ with $i\ge 1$ are  defined iteratively as follows: 
When $I_{i,j} = [a,b]$ has been defined, let $I_{i+1,2j-1} = [a,a+\lambda_i|a-b|]$
and $I_{i+1,2j} = [b-\lambda_i|a-b|,b]$. This is well-defined as $\lambda_i<\frac 1 2$. 
Then we set 
\[
C_i = \bigcup_{j=1}^{2^i} I_{i,j}.
\]

For the fat Cantor set $F$, likewise we associate it with a sequence of positive real numbers $(\beta_i)_{i=1}^\infty$  such that
$$\beta_i=(1-\lambda_{i+1})P_i$$
where $P_i=\lambda_1\lambda_2\cdots\lambda_i$, and $\lambda_i$ are from the previous paragraph. Clearly 
\begin{equation}\label{beta}
\sum_{i=1}^\infty \beta_i < 1
\end{equation}
 as $\lambda_i<1$. The numbers $\beta_i$ denote the lengths of the disjoint open intervals removed from the unit interval. To be more specific, we define the approximating sequence $F_i \subset \mathbb R$ with respect to $\beta_i$ in the following way. Let $F_0 = [0,1]$. Then iteratively, at step $n$, we replace one of the largest remaining intervals $[a,b]$ of the set $F_{n-1}$ by the set
\begin{equation}\label{F}
 \left[a,a+r\right] \cup \left[a+r + \beta_n,b\right], \quad \text{where} \quad
 r = \frac12\left(b-a-\sum_{i=n}^\infty \beta_i\right)
\end{equation} 
and obtain $F_n$ in this way. We claim that, there is always one interval in $F_{n}$ that has length strictly larger than $\sum_{i=n+1}^\infty \beta_i$. If so, then   $F$ is  well-defined. 

 Indeed when $n=0$ our claim follows immediately \eqref{beta}. Then under the induction assumption that there is an interval $[a,\,b]\subset F_{n-1}$ satisfying $b-a>\sum_{i=n}^\infty \beta_i$, we further have  that at the $n$-th step by \eqref{F} there is an interval with length
$$b-a-r-\beta_n=\frac 1 2 \left(b-a-\beta_n+\sum_{i=n+1}^\infty \beta_i\right)> \sum_{i=n+1}^\infty \beta_i,$$ 
where the last inequality comes from the induction assumption. Therefore the largest interval in $F_{n}$ has length strictly larger than $\sum_{i=n+1}^\infty \beta_i$. Thus our claim follows. Moreover the length of the largest remaining interval in $F_n$ goes to zero as $n\to \infty$ by \eqref{F}. Thus $F$ is a topological Cantor set. 
The fact that $\mathscr H^{1}(F)>0$ comes from \eqref{beta}. 
\\

\noindent\textbf{Step 2: The unremovability of $E$ for $q\le p$. }
Fix $p>2$ and a set $E=C\times F$ from the Step 1, with the sequence $\{\lambda_i\}$ to be determined later. Let $1\le q\le p$. We define a function $v\in W^{1,\,q}(\mathbb R^2\setminus E)$ such that it cannot be extended to $W^{1,\,q}(\mathbb R^2)$. 
To do this, we first construct a function $u\in L^{\infty}(\mathbb R^2\setminus E)$ with $\nabla u\in L^{q}(\mathbb R^2\setminus E)$.

Let $u(x,\,y)=0$ if $x< 0$ and $u(x,\,y)=1$ if $x>1$. 
For each $y\in F$ further define
$$u(x,\,y)=\frac {2j-1}{2^{i+1}}\quad \text{ for  $\ x\in I_{i,\,j}\setminus (I_{i+1,\,2j-1}\cup I_{i+1,\,2j})$}, $$ 
where $ i\in \mathbb N,\,1\le j\le 2^i.$ 
Then for $y\in F$, $u(x,\,y)$ is a Cantor step function with respect to $x$ if we extend it continuously. In Figure~\ref{fig:strips} we give an example of such a function.

Next we define $u(x,\,y)$ for $y\not\in F$. For $(x,\,y)\in [0,\,1]^2\setminus E$ and $\dist(y,\,F)\le \dist(x,\,C)$ we also set 
$$u(x,\,y)=\frac {2j-1}{2^{i+1}}\quad \text{ for  $\ x\in I_{i,\,j}\setminus (I_{i+1,\,2j-1}\cup I_{i+1,\,2j})$}, $$ 
where $ i\in \mathbb N,\,1\le j\le 2^i.$  
Then for fixed $y_0\notin F$, on the horizontal line $y=y_0$ we  have already defined the function $u$ up to finitely many open intervals. We then simply define $u$ as an affine function on each remaining interval so that it is continuous on this line. 
Then $u$ is defined in $\mathbb R^2\setminus E$,
and the set
$$\left\{ (x,\,y)\colon u(x,\,y)=\frac {2j-1}{2^{i+1}} \right\}$$
has Lipschitz boundary.

We claim that $u$ is also continuous in $\mathbb R^2\setminus E$. Indeed if $\dist(y,\,F)< \dist(x,\,C)$, then by definition $u$ is locally constant and hence certainly continuous. For the remaining case where $0\neq \dist(y,\,F)\ge \dist(x,\,C)$, there is an open interval $I$ such that $y\in I$, $I\cap F=\emptyset$, and for every $y_0\in I$ the function $u(x,\,y_0)$ is Lipschitz with the constant depending only on $\dist(y_0,\,F)$ (as $E$ is already fixed). Then for any such an $x$, in the vertical direction $u$ is also continuous since 
the affine-extension is done with respect to domains where $u$ is locally constant and whose boundary is a $1$-Lipschitz graph. Consequently, $u$ is even locally Lipschitz. Hence $u$ is a continuous function. 

\begin{figure}
  \centering
  \includegraphics[width=0.9\textwidth]{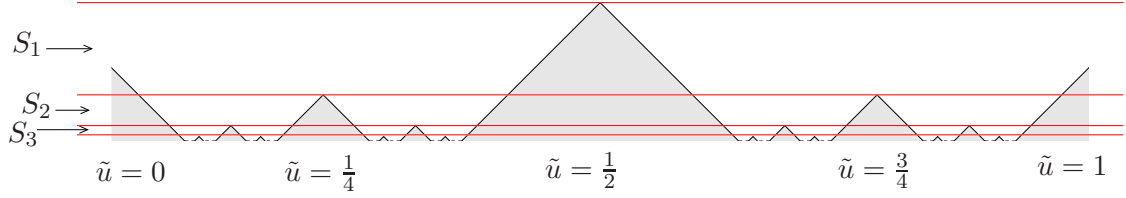}
  \caption{An illustration of the definition of $u$ for $\lambda_i = \frac13$.
  On the $x$-axis the function $u$ is defined as the Cantor step function.
  The constant regions are extended to the complement as shown by the gray areas. For any horizontal line
  there are finitely many open intervals where the function is not defined by the previous extension. 
  On each such interval we extend it as an affine function. We then estimate $|\nabla u|$
  separately on each horizontal strip $S_i$.}
  \label{fig:strips}
 \end{figure}

We next estimate the Sobolev-norm of $u$. First up to a suitable translation we consider $u$ in a strip $S_i$ which is defined as
\[
 S_i = \mathbb R \times \left[\frac12(1-\lambda_{i+1})P_i, \frac12(1-\lambda_{i})P_{i-1} \right],
\]
and is a part of $\mathbb R^2\setminus E$ (up to a suitable translation). Also recall that $P_i=\lambda_1\lambda_2\cdots\lambda_i.$ 
Then each $S_i$ minus the triangles where the function is defined as constant has at most $2^i$ connected components $K$.

Up to another translation, each component $K$ equals 
\[
 \left\{(x,y) \in \mathbb R^2\colon |x| < y ,\,\frac12(1-\lambda_{i+1})P_i < y < \frac12(1-\lambda_{i})P_{i-1}\right\}
\]
and up to adding a constant the function $u$ restricted on it is defined as
\[
 \tilde u(x,y) = 2^{-i-1}\frac{x}{y};
\]
see Figure~\ref{fig:stripf}.
Thus $|\nabla \tilde u| \lesssim 2^{-i}P_i^{-1}$
in the strip $S_i$.   
Since each of the $2^i$ components $K$ have width and height comparable to $P_i$, we get
\[
\int_{S_i}|\nabla \tilde u|^q \lesssim 2^iP_i^2 \left(2^{-i}P_i^{-1}\right)^q = 2^{i(1-q)}P_i^{2-q}.
\]

  \begin{figure}
   \includegraphics[width=0.45\textwidth]{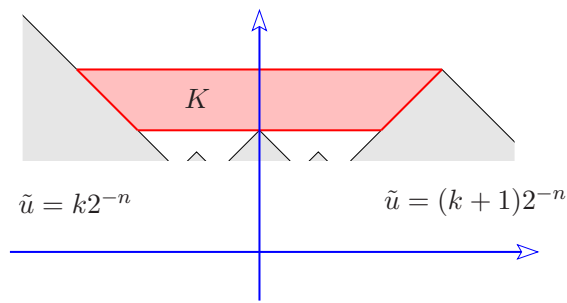}
  \caption{Each strip $S_i$  consists of $2^i$ connected components outside the regions where $\tilde u$ was initially defined
  as constant. Such a component $K$ is drawn here in the case $\lambda_i = \frac13$. Also the choice of the coordinates
  used in the estimate is indicated in the figure.}
  \label{fig:stripf}
 \end{figure}

Let us recall that $\beta_i = (1-\lambda_{i+1})P_{i}$.  This implies that we only have copies of $S_i$ in $\mathbb R\times(F_{j-1}\setminus F_j)$ with $i\ge j$. 
Consequently we  have
\begin{equation}\label{eq:nablaestimate}
 \int_{\mathbb R^2\setminus E}|\nabla u|^q \lesssim \sum_{j=1}^\infty\sum_{i=j}^\infty2 \int_{S_i}|\nabla \tilde u|^q \lesssim   \sum_{i=1}^\infty2i \int_{S_i}|\nabla \tilde u|^q \lesssim \sum_{i=1}^\infty i2^{i(1-q)}P_i^{2-q}. 
\end{equation}
By H\"older's inequality and the fact that $E$ is compact, it suffices to check the non-removability for the case $q = p$.

Choose $\lambda_i$ in such a way that
\begin{equation}\label{eq:Pidef}
  2^{i(1-p)}P_i^{2-p} = \frac{1}{i^3}
\end{equation}
for all $i$ large enough. That is,
\[
 \lambda_i =\min\left\{ \frac 1 3,\,\frac{P_i}{P_{i-1}}\left(\frac{(i-1)^32^{(i-1)(1-p)}}{i^32^{i(1-p)}}\right)^\frac{1}{2-p}\right\}
  =\min\left\{ \frac 1 3,\, 2^\frac{p-1}{2-p} \left(\frac{i}{i-1}\right)^\frac{3}{p-2}\right\}.
\]
Observe that $\lambda_i\sim 2^\frac{p-1}{2-p}$ with the constant independent of $i$. 
With this choice
\begin{align*}
 \sum_{i=1}^\infty i2^{i(1-p)}P_i^{2-p}
 =  \sum_{i=1}^\infty \frac{1}{i^2} < \infty.
\end{align*}
Therefore by \eqref{eq:nablaestimate} we conclude that $\nabla u\in L^{p}(\mathbb R)$. 

By letting
$$v=u\varphi,$$
where $\varphi\in C^{\infty}_c(\mathbb R^2)$ has support in $[-1,\,2]^2$ and 
satisfies $\varphi(x)=1$ for $x\in [0,\,1]^2$, we 
have $v\in W^{1,\,p}(\mathbb R^2\setminus E)$. 
However $v$ cannot be extended to a function in $W^{1,\,p}(\mathbb R^2).$ 
Indeed, by the Sobolev embedding theorem for $p>2,$
the precise representative of an extension $w\in W^{1,\,p}(\mathbb R^2)$ 
would continuous, while by definition the extension of $v$ is a 
Cantor function (multiplied by a smooth function) when restricted to $y=y_0$ 
for $y_0\in F$ with $|F|>0$. This would contradict the fact that the precise representative of a Sobolev function is absolutely continuous
 along almost every line parallel to the coordinate axes; see \cite[4.5.3, 4.9.2]{EG1992}. 
\\

\noindent\textbf{Step 3: The removability of $E$ for $q> p$.} We claim that, for the set $E$ defined  above, for every two points $z_1,\,z_2\in \mathbb R^2\setminus E$ there is a curve $\gamma \subset \mathbb R^2\setminus E$ such that
\begin{equation}\label{curve}
\int_{\gamma}\dist(z,\,E)^{\frac 1 {1-q}}\, ds(z)\le C(p,\,q) |z_1-z_2|^{\frac {q-2}{q-1}}. 
\end{equation}
If so, then by \cite[Theorem 1.1]{S2010} (or by \cite{K1998}), we conclude that   any function in $W^{1,\,q}(\mathbb R^2\setminus E)$ can be extended to $W^{1,\,q}(\mathbb R^2)$. Since the Lebesgue measure of $E$ is zero, it follows that  
$W^{1,\,q}(\mathbb R^2\setminus E)=W^{1,\,q}(\mathbb R^2)$ and hence $E$ is removable for $W^{1,\,q}(\mathbb R^2)$. 

Now let us show the claim. We only consider the case where 
$z_1,\,z_2\in [0,\,1]^2$, as the general case can be easily reduced to it. 
 For any $z_1,\,z_2\in [0,\,1]^2\setminus E$, we write $z_1=(x_1,\,y_1)$ and $z_2=(x_2,\,y_2)$. First we may assume that $y_1,\,y_2\notin F$. Indeed if $y_1\in F$ then $x_1\notin C$. Then there is a removed interval $I\subset [0,\,1]$ (in the construction of $C$) containing $x_1$. Find a point $x\in I$ such that 
$$3|x-x_1|\le \min\{|z_1-z_2|,\,\diam(I)\} \ \text{ and } \  3\dist(x,\,C)\ge  \min\{|z_1-z_2|,\,\diam(I)\}; $$
the existence of such an $x$ follows from the triangle inequality. Let $w_1=(x,\,y_1)$. 
Next since $F$ is topologically a Cantor set, as $y_1\in F$ one can find a point $z_1'=(x,\,y_1')$ such that $3|y_1-y_1'|\le  |z_1-z_2|$ and $y_1'\notin F$. Then the curve consisting of the two line segments $[z_1,\,w_1]$ and $[w_1,\,z_1']$ satisfies
$$\int_{[z_1,\,w_1]\cup [w_1,\,z_1']}   \dist(z,\,E)^{\frac 1 {1-q}}\, ds(z) \lesssim  \int_{0}^{|z_1-z_2|} t^{\frac 1 {1-q}}\, dt+|z_1-z_2| ^{\frac { 2-q} {1-q}}\lesssim |z_1-z_2| ^{\frac { q-2} {q-1}}, $$
with the constant depending only on $q$.
We may also apply a  similar argument for $z_2$. Thus our assumption is legitimate. 

Under such an assumption  we are going to construct the curve connecting $z_1,\,z_2$. Recall that $\lambda_i\sim 2^\frac{p-1}{2-p}$ and $P_i=\lambda_1\lambda_2\cdots \lambda_i$. Then there is a natural number $n$ such that $P_{n+1}\le |z_1-z_2|\le P_n$. Notice that there is an interval $I_0 \in \{I_{n,\,k}\}_{k=1}^{2^n}$ such that 
$$\max\{\dist(I_0,\,z_1),\,\dist(I_0,\,z_2)\}\lesssim  |z_1-z_2|\ \text{ and } \ \diam(I_0)=P_n$$
with the constant depend only on $p$ by the Cantor construction. Denote by $x_0$ the middle point of such an interval. Let $\gamma=[z_1,\,(x_0,\,y_1)]\cup[(x_0,\,y_1),\,(x_0,\,y_2)]\cup [(x_0,\,y_2),\,z_2]$ be the curve joining $z_1,\,z_2$ and consisting of three line segments; see Figure~\ref{fig:curve}. We show that $\gamma$ is the desired curve. 

 \begin{figure}
  \includegraphics[width=0.35\textwidth]{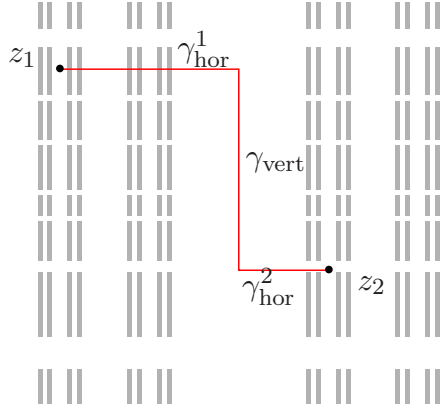}
  \caption{We connect any two points $z_1, z_2 \in \mathbb R^2 \setminus E$ (up to a negligible error near the end-points) with a curve consisting of a vertical part $\gamma_\text{vert}=[(x_0,\,y_1),\,(x_0,\,y_2)]$ 
  and two horizontal parts $\gamma_\text{hor}^1= [z_1,\,(x_0,\,y_1)]$, $\gamma_\text{hor}^2=[(x_0,\,y_2),\,z_2]$.
  The desired estimate on the vertical part comes from the almost self-similarity of the Cantor set $C$ with dimension strictly less than $1$,
  whereas for the horizontal parts we have to make a bit more careful estimate.}
  \label{fig:curve}
 \end{figure}

In fact for the vertical part $[(x_0,\,y_1),\,(x_0,\,y_2)]$, as $x_0$ is the middle point of $I_0$ with $\diam(I_0)=P_n$ and $\lambda_i\sim 2^\frac{p-1}{2-p}$, we have
$$\int_{[(x_0,\,y_1),\,(x_0,\,y_2)]} \dist(z,\,E)^{\frac 1 {1-q}}\, ds(z)\lesssim  |z_1-z_2| ^{\frac { q-2} {q-1}}, $$
with the constant depending only on $p$ and $q$. Hence it suffices for us to consider the horizontal ones. 

First of all
\begin{align*}
\int_{[z_1,\,(x_0,\,y_1)]} \dist(z,E)^\frac{1}{1-q} \, ds(z)
&\lesssim \sum_{i=n}^\infty 2^{i-n} \int_0^{P_i}t^\frac{1}{1-q}\ dt
\lesssim \sum_{i=n}^\infty 2^{i-n}P_i^{\frac{q-2}{q-1}}\\
& \lesssim |z_1-z_2|^\frac{q-2}{q-1}\sum_{i=n}^\infty 2^{i-n}\left(\frac{P_i}{P_n}\right)^{\frac{q-2}{q-1}}.
\end{align*}
Therefore we are left with estimating the last sum in the above expression. 
This sum is bounded from above independently of $n$, since
\begin{align*}
\sum_{i=n}^\infty 2^{i-n}\left(\frac{P_i}{P_n}\right)^{\frac{q-2}{q-1}} & =
\sum_{i=n}^\infty 2^{i-n}\left(\frac{i^\frac{3}{p-2}2^{-i\frac{p-1}{p-2}}}{n^\frac{3}{p-2}2^{-n\frac{p-1}{p-2}}}\right)^{\frac{q-2}{q-1}}
 = \sum_{i=n}^\infty \left(\frac{i}{n}\right)^{\frac{3}{p-2}\cdot\frac{q-2}{q-1}} 2^{(i-n)\left(1-\frac{p-1}{p-2}\cdot \frac{q-2}{q-1}\right)} \\
&\le \sum_{i=1}^\infty i^{\frac{3}{p-2}\cdot\frac{q-2}{q-1}} 2^{i\left(1-\frac{p-1}{p-2}\cdot \frac{q-2}{q-1}\right)} < \infty,
\end{align*}
where we have used the assumption $q>p$ to have convergence of the last sum via the fact that
$\frac{p-1}{p-2}\cdot \frac{q-2}{q-1} > 1.$ The estimate for $[(x_0,\,y_2),\,z_2]$ is similar. Hence we have shown the claim, and then the second part of the theorem follows. 
\end{proof}

\begin{proof}[Proof of Theorem~\ref{example}]
Let 
$$\Omega=A\times (0,\,0.5]\cup (-1,\,2)^2 \times (0.5,\,1):=((-1,\,2)^2\setminus E) \times (0,\,0.5] \cup (-1,\,2)^2 \times (0.5,\,1), $$
where $E\subset  (-1,\,2)^2$ is compact and removable for $W^{1,\,q} $ for all 
$p< q<\infty$ but not for $W^{1,\,p}$. Such a set $E$ exists by 
Theorem~\ref{remove} (scale and translate if necessary).

Let $u(x,\,y)\in W^{1,\,p}(A)$, and $\hat u(x,\,y,\,z)=u(x,\,y)\kappa(z)$ for $0\le z\le 1$, where $\kappa(z)$ is a smooth function with $\kappa(z)=1$ if $0\le z\le \frac 1 4$, $0 \le \kappa \le1$, $|\nabla \kappa(z)|\le 10$ and $\kappa(z)=0$ if $\frac 3 4\le z\le 1$.  By definition $\hat u \in W^{1,\,p}(\Omega)$.

Note that  removability is a local question. Namely $E$ is removable for $W^{1,\,p}$ if and only if for each $x\in E$ there is $r>0$ such that
$$W^{1,\,p}(B(x,\,r)\setminus E)=W^{1,\,p}(B(x,\,r)); $$
see e.g. \cite{K1999}. 
Hence if $\hat u$ can be approximated by $\{\hat u_n\}$ in the $W^{1,\,p}$-norm with $\hat u_n\in W^{1,\,q}(\Omega)$, then by Fubini's theorem and the fact that $E$ is removable for $W^{1,\,q}$, for almost every $0\le z\le \frac 1 4$ we get a sequence, denoted by $u_n\in W^{1,\,q}((-1,\,2)^2)\subset W^{1,\,p}((-1,\,2)^2)$, approaching some $\wz u$ in $W^{1,\,p}((-1,\,2)^2)$. Note that $\wz u$ coincides with $u$ on $A$. 
This then contradicts the unremovability of $E$ since we chose $u$ arbitrarily; notice that $E$ has $2$-Lebesgue measure zero.

We finally show that $\Omega$ is homeomorphic to a ball via a locally bi-Lipschitz map. Towards this, for $w=(x,\,y,\,z)\in \Omega$ define
$$f_1(w)=f_1(x,\,y,\,z)=(x,\,y,\,z \dist( w,\, E\times (0,\,0.5] )) $$
for $w=(x,\,y,\,z)\in \Omega.$
Then $f_1$ is locally bi-Lipschitz, and $f_1$ is a homeomorphism as it fixes 
the first two coordinates and is a homeomorphism with respect to the third 
one. Moreover, $f_1(\Omega)$ is a Lipschitz domain as the bottom of $\Omega$ is mapped to a square in the $xy$-plane and $f_1$ bi-Lipschitz on the rest of
the boundary of $\Omega.$ Hence there is another (locally) 
bi-Lipschitz homeomorphism 
$f_2$ mapping $f_1(\Omega)$ onto the unit ball. 
Letting $f=f_2\circ f_1$ we conclude 
that $\Omega$ is locally bi-Lipschitz homeomorphic to a ball. 
\end{proof}

\end{document}